\theoremstyle{plain}
\newtheorem{thm}{Theorem}[section]
\newtheorem{lem}[thm]{Lemma}
\newtheorem{cor}[thm]{Corollary}
\newtheorem{prop}[thm]{Proposition}
\theoremstyle{definition}
\newtheorem{defi}[thm]{Definition}
\newtheorem{rem}[thm]{Remark}
\newcommand{\bproof}{\begin{proof}}
\newcommand{\eproof}{\end{proof}}
\renewcommand{\geq}{\geqslant}
\renewcommand{\leq}{\leqslant}
\DeclareMathOperator{\id}{{\mathrm id}}
\newcommand{\GL}{\mathrm{GL}}
\newcommand{\Sn}{{\mathbb S}_n}
\newcommand{\mcR}{\mathcal{R}}
\newcommand{\mcP}{\mathcal{P}}
\newcommand{\Cc}{{\mathbb{C}}}
\newcommand{\Nn}{{\mathbb{N}}}
\newcommand{\Zz}{{\mathbb{Z}}}
\newcommand{\beq}{\begin{displaymath}}
\newcommand{\eeq}{\end{displaymath}}
\newcommand{\be}{\begin{equation}}
\newcommand{\ee}{\end{equation}}
\newcommand{\conj}{\triangleright}
\newcommand{\grconj}{\blacktriangleright}
\numberwithin{equation}{section}
\DeclareFontFamily{OT1}{rsfs}{}
\DeclareFontShape{OT1}{rsfs}{n}{it}{<-> rsfs10}{}
\DeclareMathAlphabet{\mathscr}{OT1}{rsfs}{n}{it}
\title{Twist equivalence and Nichols algebras over  Coxeter groups}
\author{Giovanna Carnovale, Gabriel Maret}
\begin{document}
 \maketitle

 \begin{abstract}
     Fomin-Kirillov algebras are quadratic approximations of Nichols algebras associated with the conjugacy class of transpositions in a symmetric group and a (rack) $2$-cocycle $q^+$ with values in $\{\pm1\}$.  Bazlov generalized their construction replacing the class of transpositions by the classes of reflections in an arbitrary finite Coxeter group. We prove that Bazolv's cocycle $q^+$ is twist-equivalent to the constant cocycle $q^-\equiv -1$, generalising a result of Vendramin. As a consequence, the Nichols algebras associated with the two different cocycles have the same Hilbert series and one is quadratic if and only if the other is quadratic. We further apply recent results of  Heckenberger, Meir and Vendramin and Andruskiewitsch, Heckenberger and Vendramin to complete the classification of the finite-dimensional Nichols algebras of Yetter-Drinfeld modules over the dihedral groups.
 \end{abstract}

\section{Introduction}
Racks and racks cocycles appeared in the study of knot theory, as they naturally provide a solution of the braid equation. It became apparent that they are key basic objects in the study of Nichols (shuffle) algebras, and the classification of pointed Hopf algebras \cite{AG}, since Nichols algebras are graded braided Hopf algebras that depend only on the datum of a braided vector space. Several tools have been developed in order to detect finiteness properties of Nichols algebras (such as finite-dimension, finite GK-dimension, finite presentation) that depend only on the rack and not on the cocycle, and properties that are preserved under suitable transformations of the cocycle,  see among others \cites{ACG, AFGV, AFGaV, AFGV1, AG, AHS, He2, HMV}. 

Relevant racks in the study of Nichols algebras can be  described as subsets $X$ of a group $G$ that are stable by conjugation, \cite{AG}*{Definition 1.3}. For such racks, \emph{twisting} a cocycle by a $2$-cocycle of $G$ with trivial coefficients corresponds, at the level of Nichols algebras, to performing a (dual) Drinfeld twist \cites{AFGV,MO}. As a consequence, Nichols algebras associated with the same rack and twist-equivalent cocycles have same Hilbert series,  same dimension, same GK-dimension. 

In the present paper we first focus on finitely generated Coxeter groups and a special family of racks and $2$-cocycles, that is of interest due to its relation with the cohomology of the flag variety. Here the rack is the class (or union of classes) of reflections in a  Coxeter group $W$, and the $2$-cocycle is a cocycle with values in $\{\pm1\}$. If $W$ is a Weyl group, the quadratic approximation of the corresponding Nichols algebra is a non-commutative algebra containing the cohomology algebra of the flag variety of the corresponding algebraic group. For $W={\mathbb S}_n$ this is the so-called Fomin-Kirillov algebra introduced in  \cite{FK} for creating an algebraic  and combinatorial framework to perform Schubert calculus. The construction was later generalized to the case of arbitrary Weyl groups in \cite{Bazlov}. For $W={\mathbb S}_3, {\mathbb S}_4$ and ${\mathbb S}_5$ the Nichols algebra in question is quadratic and finite-dimensional \cites{MS,GGI}. For $n>5$ a proof of infinite-dimensionality of the Fomin-Kirillov algebras appeared in \cite{Ba}, but it is not known whether the Nichols algebras are quadratic. This property, as well as dimension, is preserved by twists. 
  
A novel approach to the analysis of the dimension and of the degree of a minimal set of generating relations may come through geometry, using the category equivalence in \cite{KS}. Through this equivalence, Nichols algebras correspond to Intersection Cohomology  (IC) complexes on the infinite-dimensional space ${\rm Sym}({\mathbb C})$ of monic polynomials, with a so-called factorization datum. One may expect that the Nichols algebras associated with a conjugacy class  $C$ of a finite group $G$ and the constant cocycle equal to $-1$ might be easier to handle because the restriction of the corresponding IC-complexes to the dense open subset ${\rm Sym}_{\neq}({\mathbb C})$ of multiplicity-free polynomials is the push-forward through the covering map of the constant sheaf on the Hurwitz space with Galois group $G$ and local monodromy $C$ (\cite{KS}*{\S 3.3 A}, \cite{ETV}*{\S 2.4, 3.3}). These spaces are widely studied. For this reason, it becomes relevant to know whether the cocycle associated with the rack of reflections in Coxeter groups as in \cites{Bazlov,FK, MS} are twist-equivalent to the $-1$ constant cocycle. 

For $W={\mathbb S}_n$ this problem was answered in the affirmative in \cite{Vendramin}, and for the dihedral group an attempt was done in  \cite{Dold}. In the present paper we provide a unified approach to give a positive answer that in fact applies to all finitely generated Coxeter groups as long as the entries of its Coxeter matrix $A(W)$ are all finite.  We also show that the cocycles are cohomologous if and only if the entries  in $A(W)$ are all odd.  Hence, for dihedral groups of regular polygons with an odd number of edges, we recover the well-known fact that the Nichols algebras corresponding to these two cocycles are isomorphic, \cite{MS}*{\S 5}. 

We next combine the information we gained in order to reorganize and complement what is known about the dimension of Nichols algebras associated with reflections in Coxeter groups, showing that the key case to be studied is type $A_5$. Finally, we address  the case of Coxeter groups of rank $2$, that is, the dihedral groups $I_2(n)$ of order $2n$. This is of particular interest because all finite groups generated by $2$ involutions are dihedral. Finite-dimensional Nichols algebras over $I_2(n)$ for $4|n$ were classified in \cites{BC,FG}.  The case of $n=3$ is to be found in \cite{AHS} and $I_2(2)$ is not irreducible. Using the main results in \cites{AHV,HMV} and the work in  \cite{AF} we address the remaining open case, i.e., when $4$ does not divide $n$ and $n\neq 2,3$, completing the classification of finite-dimensional Nichols algebras over dihedral groups. In particular, we show that there is no finite-dimensional Nichols algebra over $I_2(n)$ when $n$ is odd and different from $3$. Hence, the only finite-dimensional complex pointed Hopf algebra with group of grouplikes isomorphic to $I_2(n)$ for such an $n$ is the group algebra. 

The paper is structured as follows: in \S \ref{SectionDefinition} we introduce the basic notions on Coxeter groups, racks, cocycles and Nichols algebras; in \S \ref{SectionGeneralizingVendramin} we define the cocycles $q^+$ and $q^-$ on the rack of reflections in an arbitrary Coxeter group and translate twist equivalence of $q^+$ and $q^-$ into existence of a section, generalizing the strategy in \cite{Vendramin}. The core argument is  in \S \ref{SectionGeneralProof} where we apply results in \cite{Stembridge} in order to show existence of the sought section (Theorem \ref{TheoremGabriel}). In  \S \ref{applications}  we collect and complement to what is known on Nichols algebras associated with the rack of reflections in finite Coxeter groups, we apply Theorem \ref{TheoremGabriel}  and the main results in \cites{AHV,HMV} in order to list the  Coxeter groups for which a conjugacy class  of reflections could possibly support a finite-dimensional Nichols algebra. Last section is devoted to the completion of the classification of finite-dimensional Nichols algebra over dihedral groups.

\section{Basic Notions}\label{SectionDefinition}

\subsection{Coxeter groups and Root systems}\label{sec:notation}
Let $(W,S)$ be a Coxeter system, with  preferred generating set $S=\{s_1,\,\ldots,\,s_l\}$, Coxeter graph $\Gamma(W)$, length function $\ell\colon W\to{\mathbb N}$ and Coxeter matrix $A(W)=(m_{ij})_{i,j=1,\,\ldots l}$, where $|s_is_j|=m_{ij}$. We further assume that $m_{ij}<\infty$ for any $i,j$. Following \cite{BjornerBrenti} we  fix a basis $\Delta=\{\alpha_1,\,\ldots,\,\alpha_l\}$ of ${\mathbb R}^l$ and define the  bilinear form $(,)$ on ${\mathbb R}^l$ by setting $(\alpha_i,\alpha_j) := -\text{cos}(\frac{\pi}{m_{ij}})$ for all $i,j \in \{1,\ldots,l\}$ and we identify $W$ with the image of the geometric representation obtained mapping $s_i\in S$ to the reflection with respect to $\alpha_i$. Then $\Phi=W\Delta$ is the set of roots and we denote by $\Phi^+$ and $\Phi^-$ the set of positive and negative roots, respectively. We also set \[T:=\{s_\alpha~|~\alpha\in\Phi^+\}=\{ws_\alpha w^{-1}~|~\alpha\in\Delta\}\] for the set of reflections in $W$ so $\Phi = \{\pm \alpha~|~s_\alpha\in T \} $. For $x\in T$  we indicate by $\alpha_x$ be unique root in $\Phi^+$ satisfying $x=s_{\alpha_x}$.

We will be mainly interested in $W$-conjugacy classes in $T$: there are as many as the number of connected components of the graph obtained from $\Gamma(W)$ by removing the edges with even label,  \cite{Bourbaki}*{Proposition 3, Chapitre IV, \S 1, Th\'eor\`eme 1, Chapitre VI, \S4}. 

We denote by $\Sigma = S^{(\Nn)}$ the  free group on $S$ whose elements are words in the alphabet $S$ and by $\pi\colon \Sigma\to W$  the projection. 
We denote by $a^{op}$ the  word \emph{opposite} to $a$, that is, the word obtained from $a$ by reversing  the order of its letters. Clearly, $\pi(a^{op})=\pi(a)^{-1}$.

For $x\in W$ the subset $\mcR(x)\subset \Sigma$ stands for the set of reduced expressions of $x$. An element $x\in W$ lies in $T$ if and only if it has a non-trivial palindromic reduced expression, that is, a non-empty reduced expression $r(x)$ of $x$ such that $r(x)=r(x)^{op}$, cf. \cite{Stembridge}*{Proposition 2.4}. 
For $x\in T$ we denote by $\mcP(x)$ the set of \emph{palindromic} reduced expressions of $x$. 

For the reader's convenience we recollect some very basic facts on length of conjugate reflections that will be needed in the sequel. 
\begin{lem}
\label{LemmaLinkArrowScalarRoots}
    Let $\beta\in\Phi^+$ and $\alpha \in \Delta$. Then
    \begin{enumerate}
        \item[(i)] $\ell(s_\alpha s_\beta s_\alpha) = \ell(s_\beta)+ 2   \iff 
        s_\beta(\alpha)\in\Phi^+\setminus\{\alpha\} \iff
        (\alpha ,\beta) < 0$ and  $ \beta \ne \alpha$.   
        \item[(ii)]
        $\ell(s_\alpha s_\beta s_\alpha) = \ell(s_\beta)- 2  \iff  s_\beta(\alpha)\in\Phi^-\setminus\{-\alpha\} \iff (\alpha ,\beta) > 0$ and $\beta\neq\alpha$.
        \item[(iii)] $\ell(s_\alpha s_\beta s_\alpha) = \ell(s_\beta) \iff s_\beta s_\alpha = s_\alpha  s_\beta \iff  (\alpha ,\beta) =0 $ or $ \beta=\alpha$. 
        \item[(iv)] If $\beta\in\Phi^+\setminus\Delta$, there is always $\alpha\in\Delta$ such that $\ell(s_\alpha s_\beta s_\alpha)=\ell(s_\beta)-2$.
    \end{enumerate}
\end{lem}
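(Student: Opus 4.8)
The plan is to reduce all four statements to the behaviour of the single root $s_\beta(\alpha)$, exploiting two standard facts: a simple reflection $s_\alpha$ sends $\alpha$ to $-\alpha$ and permutes $\Phi^+\setminus\{\alpha\}$, and $\ell(ws_\alpha)=\ell(w)\pm1$ according to whether $w(\alpha)\in\Phi^+$ or $\Phi^-$ (symmetrically on the left). Writing $\ell(s_\alpha s_\beta s_\alpha)$ by first appending $s_\alpha$ on the right to $s_\beta$ and then prepending $s_\alpha$, the two increments are governed by $s_\beta(\alpha)$ and by $(s_\beta s_\alpha)^{-1}(\alpha)=s_\alpha(s_\beta(\alpha))$. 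When $s_\beta(\alpha)\in\Phi^+\setminus\{\alpha\}$ the permutation fact keeps $s_\alpha(s_\beta(\alpha))$ positive, so both increments are $+1$ and the length grows by $2$; when $s_\beta(\alpha)\in\Phi^-\setminus\{-\alpha\}$ both are $-1$ and it drops by $2$; when $s_\beta(\alpha)=\pm\alpha$ the two increments cancel. Since these three alternatives are exhaustive and yield the three distinct outcomes $+2,-2,0$, each implication reverses automatically, giving the first equivalence in (i), (ii) and the length half of (iii).

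Next I would pin down the sign of $(\alpha,\beta)$ by reading off $s_\beta(\alpha)=\alpha-2(\alpha,\beta)\beta$ in the simple-root basis. Write $\beta=\sum_i b_i\alpha_i$ with all $b_i\ge0$ and $\alpha=\alpha_j$; since $(\gamma,\gamma)=1$ for every root $\gamma$ (the form is $W$-invariant and $(\alpha_i,\alpha_i)=1$), a positive root distinct from $\alpha$ cannot be proportional to $\alpha$, hence has $b_i>0$ for some $i\ne j$. If $(\alpha,\beta)<0$ every coefficient of $s_\beta(\alpha)$ is $\ge0$ with a strictly positive one off the $j$-th slot, so $s_\beta(\alpha)\in\Phi^+\setminus\{\alpha\}$; if $(\alpha,\beta)>0$ and $\beta\ne\alpha$ the coefficients are $\le0$ with a strictly negative off-diagonal one, so $s_\beta(\alpha)\in\Phi^-\setminus\{-\alpha\}$; and $s_\beta(\alpha)=\alpha$ (resp. $-\alpha$) exactly when $(\alpha,\beta)=0$ (resp. $\beta=\alpha$). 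This matches the root-side trichotomy with the trichotomy $(\alpha,\beta)<0$; $(\alpha,\beta)>0$ with $\beta\ne\alpha$; $(\alpha,\beta)=0$ or $\beta=\alpha$, completing the second equivalences of (i) and (ii). For the remaining part of (iii), commutation $s_\alpha s_\beta=s_\beta s_\alpha$ is equivalent to $s_{s_\alpha(\beta)}=s_\beta$, i.e. $s_\alpha(\beta)=\pm\beta$, which is again the condition $(\alpha,\beta)=0$ or $\beta=\alpha$.

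Finally, for (iv), having reduced it via (ii) to finding a simple root $\alpha$ with $(\alpha,\beta)>0$, I would argue by contradiction: if $(\alpha_i,\beta)\le0$ for all $i$, then expanding $(\beta,\beta)=\sum_i b_i(\alpha_i,\beta)$ with $b_i\ge0$ forces $(\beta,\beta)\le0$, contradicting $(\beta,\beta)=1$. Hence some $(\alpha_i,\beta)>0$, and since $\beta\notin\Delta$ we have $\beta\ne\alpha_i$, so (ii) yields $\ell(s_{\alpha_i}s_\beta s_{\alpha_i})=\ell(s_\beta)-2$. The only genuinely delicate points are bookkeeping the degenerate cases $s_\beta(\alpha)=\pm\alpha$ consistently on both sides of the equivalences, and noticing that the normalization $(\gamma,\gamma)=1$ is exactly what powers the short positivity argument in (iv); everything else is a direct application of the standard length and root-permutation lemmas.
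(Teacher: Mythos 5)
Your proof is correct and follows essentially the same route as the paper: both reduce the length statements to the location of $s_\beta(\alpha)$ relative to $\Phi^\pm$ and then read off the sign of $(\alpha,\beta)$ from $s_\beta(\alpha)=\alpha-2(\alpha,\beta)\beta$, and (iv) is proved by the same positivity contradiction using $(\beta,\beta)=1$. The only difference is cosmetic: where the paper quotes Bourbaki's exercise for the two-step length criterion and dispatches the commutation half of (iii) "by exclusion," you derive these from the one-step length criteria and the coefficient expansion in the simple-root basis, which makes the argument a bit more self-contained but is not a different method.
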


\bproof
We prove (i).   In virtue of  \cite{Bourbaki}*{Exercise V.4.8}, the condition $\ell(s_\alpha s_\beta s_\alpha) = \ell(s_\beta)+ 2$  is equivalent to $s_\beta(\alpha)\in\Phi^+$ and $s_\alpha s_\beta(\alpha)\in\Phi^+$, that it equivalent to $s_\beta(\alpha)\in\Phi^+\setminus\{\alpha\}$ by \cite{BjornerBrenti}*{Lemma 4.4.3}. Now, $s_\beta(\alpha)=\alpha-2(\alpha,\beta)\beta\in\Phi^+\setminus\{\alpha\}$ if and only if $(\alpha,\beta)<0$ and $\beta\neq\alpha$
Then, (ii) follows similarly and (iii) by exclusion. To prove (iv) assume for a contradiction that (ii) never occurs for $\beta$. Then $(\beta,\alpha)\leq 0$ for any $\alpha\in\Delta$ and thus also for any $\alpha\in\Phi^+$, whence $(\beta,\beta)\leq0$, a contradiction. 
\eproof

\subsection{Racks, cocycles and Nichols algebras}
\label{SectionRacks}

\begin{defi}
\label{DefinitionRack}
    A rack is a pair $(X,\conj)$ where $X$ is a non-empty set and $\conj: X\times X \to X $ is a function, such that the map $x \mapsto i \conj x $ is bijective for all $i\in X$, and $i\conj(j\conj k) = (i\conj j)\conj(i\conj k) $ for all $i,j,k \in X$.
\end{defi}

We will be mainly interested in racks of the form $X \subset G $, where $G$ is a group, $X$ is stable by conjugation and $x\conj y = xyx^{-1}$ for $x,y \in X$.

\begin{defi}
\label{DefinitionRackCocycle}\cite{AG}*{\S 2.2},\cite{CKS}*{\S 2}
    Let $X$ be a rack and let $A$ be an abelian group. A map $q: X \times X \longrightarrow A$ is a 2-cocycle (or just a cocycle) if  
  \begin{equation}\label{eq:cocycle-condition}q(x,y\conj z)q(y,z)=q(x\conj y,x\conj z)q(x,z) \end{equation}
 for all $x,y,z \in X$. We call $Z_R^2(X,A)$ the set of rack cocycles.  Two cocycles $q,q' \in Z_R^2(X,A)$ are said to be cohomologous if there exists $\gamma: X \to A$ such that: 
\begin{equation}\label{eq:cohomologous}q(x,y)=\gamma(x\conj y)^{-1}q'(x,y)\gamma(y),\mbox{ for all }x,y \in X.\end{equation} 
\end{defi} 

Our main motivation for studying racks and cocycles comes from Nichols algebras, whose construction we briefly recall. The base field here is $\mathbb C$.

Given a rack  $X$ and a cocycle $q$, the operator 
\begin{align*}
c_q\colon\Cc X\otimes \Cc X&\longrightarrow\Cc X\otimes \Cc X\\
x\otimes y&\mapsto q({x,y})x\conj y\otimes x
\end{align*}
satisfies the braid equation on $(\Cc X)^{\otimes 3}$:
\begin{equation}\label{eq:braid}(c_q\otimes \id)(\id\otimes c_q)(c_q\otimes \id)=(\id\otimes c_q)(c_q\otimes \id)(\id\otimes c_q).\end{equation}
We call any pair $(V,c)$ where $V$ is a vector space and $c\in \GL(V^{\otimes 2})$ satisfies \eqref{eq:braid} a braided vector space, and $c$ a braiding for $V$. 

We recall that, for $n\geq 2$, the $n$-th braid group $B_n$ is generated by $\sigma_i$, for  $i=1,\,\ldots,n-1$ with relations $\sigma_i\sigma_{i+1}\sigma_i=\sigma_{i+1}\sigma_i\sigma_{i+1}$ for $i\leq n-2$ and $\sigma_i\sigma_j=\sigma_j\sigma_i$ for $i,j=1,\,\ldots, n-1$ with $|i-j|>1$.
There is a unique section, called the Matsumoto section, $M\colon \Sn\to B_n$
satisfying $M(\sigma\tau)=M(\sigma)M(\tau)$ whenever $\ell(\sigma\tau)=\ell(\sigma)\ell(\tau)$.

Given a braided vector space $(V,c)$, for any $n\geq 2$, there is a representation
\begin{align*}\rho_n&\colon B_n\to \GL(V^{\otimes n})\\
\sigma_i&\mapsto \id^{\otimes(i-1)}\otimes c\otimes \id^{\otimes(n-i-1)}\end{align*}
that combined with the Matsumoto section, gives rise to the $n$-th quantum symmetrizer $\Omega_n:=\sum_{\sigma\in\Sn}\rho_n(M(\sigma))\in {\mathrm{End}}(V^{\otimes n})$.

The Nichols algebra associated with $(V,c)$  is the quotient of the tensor algebra ${\mathcal B}(V,c):=T(V)/\bigoplus_{n\geq0}{\mathrm{Ker}\,}\Omega_n$. If $(V,c)$ comes from a rack-cocycle pair $(X,c)$ we will also denote the Nichols algebra by ${\mathcal B}(X,q)$. It is usually very difficult to detect for which pairs $(X,q)$ the corresponding Nichols algebra is finite-dimensional, or has finite GK-dimension, or whether it is finitely presented. Big progresses have been obtained in order to detect properties of ${\mathcal B}(X,q)$ from the knowledge of $X$ only. However, there are cases on which dependence on the cocycle is apparent, \cite{HMV}*{Table 1} and some cocycles are easier to handle than others. 

\begin{rem}\label{rem:cohomologous}
\begin{enumerate}
    \item\label{item:isomorphic} If $X$ is a rack and $q,\,q'$ are cohomologous rack cocycles, then ${\mathcal B}(X,q)\simeq {\mathcal B}(X,q')$, cf. \cite{AG}*{Theorem 4.14}.
         \item \label{item:Nich-inclusion} Let $(V,c)$ be a braided vector space. If $V'$ is a subspace of $V$ such that $c(V'\otimes V')\subset V'\otimes V'$, then the inclusion $V'\subset V$ induces an injective algebra homomorphism ${\mathcal B}(V',c|_{V'\otimes V'})\to{\mathcal B}(V,c)$, \cite{AZ}*{Remark 1.1}. Hence, for any  pair $(X,q)$ and any inclusion $X'\to X$ such that $X'\conj X'\subset X'$ (we call $X'$ a subrack of $X$), we have an algebra inclusion
   ${\mathcal B}(X',q|_{X'\times X'})\subseteq {\mathcal B}(X,q)$. 
    \item\label{item:YD}Let $G$ be a group. A Yetter-Drinfeld module for $G$ is the datum of a $G$-graded representation $V=\bigoplus_{g\in G}V_g$ of $G$ such that $h V_g\subset V_{hgh^{-1}}$ for any $h,g\in G$. Setting $c_V(v\otimes w):=gw\otimes v$ for any $v\in V_g$ and $w\in V$ gives a braiding on $V$. 
    Yetter-Drinfeld modules decompose as direct sums of irreducible ones. The latter are, as representations, of the form $V={\rm Ind}_H^GU$, where $H$ is the centraliser of some $g\in G$ and $\eta\colon H\to \GL(U)$ is an irreducible representation of $H$. The $G$-grading on $V$ is determined by setting $V_g:=U$. If $\eta$ is $1$-dimensional, then there is a rack cocycle $q$ on the conjugacy class $X$ of $g$, with values in $\Cc^*$ such that $(V,c_V)=(\Cc X,q)$. For general $\eta$, one needs the more general notion of cocycle with values in $\GL(U)$, \cite{AFGV}*{Section 2.4}. If $V$ is a Yetter-Drinfeld module for $G$, we will write ${\mathcal B}(V)$ instead of ${\mathcal B}(V,c_V)$.  
   \item\label{item:autom}If $\tau\in{\rm Aut}(G)$ and $V=\bigoplus_{g\in G}V_g$ is a Yetter-Drinfeld module for a group $G$, then twisting $V$ by $\tau$ gives another Yetter-Drinfeld module structure $V^\tau$ by setting $g.v:=\tau(g)v$ and $(V^\tau)_g:=V_{\tau(g)}$ for any $g\in G$ and $v\in V$. The braidings $c_V$ and $c_{V^\tau}$ coincide, so ${\mathcal B}(V)\simeq{\mathcal B}(V^\tau)$. 
\item\label{item:tens-prod}  Let  $X$ be a rack admitting a subrack   decomposition $X=X_1\coprod X_2$ such that $X_i\conj X_j=X_j$ for $i,j\in \{1,2\}$, and let $q\in Z^2_R(X,\Cc)$. Assume in addition, that $c_q^2=\id$ on $\Cc X_i\otimes \Cc X_j$, for $i\neq j$, and that either all Nichols algebras involved are finite-dimensional or that $(X_i, q|_{X_i\times X_i})$ for $i=1,2$
are as in point \ref{item:YD}.  Then, there is an algebra isomorphism
${\mathcal B}(X_1,q|_{X_1\times X_1})\widetilde{\otimes} {\mathcal B}(X_2,q|_{X_2\times X_2})\simeq{\mathcal B}(X,q)$, where $\widetilde{\otimes}$ stands for the tensor product of algebras where the multiplication is twisted via $c_q$, \cite{GranaJ}*{Theorem 2.2}, \cite{HS}*{Proposition 1.10.12}.
\end{enumerate}
\end{rem}


\subsection{Group 2-cocycles and sections}
\label{SectionGroup}


We recall that if $G$ and $A$ are groups, with $A$ abelian, the group $Z^2(G,A)$ of $2$-cocycles of $G$ with values in $A$ consists of those maps $\phi:G \times G \to A$ satisfying 
 \begin{equation*}
\phi(xy,z) \phi(x,y) = \phi(x,yz)\phi(y,z),\quad\mbox{ for all } x,y,z \in G.
 \end{equation*}
 
If $E$ is a central extension of $G$, with conjugation action $\blacktriangleright$,  projection $\pi_G: E \to G$ and ${\rm Ker}(\pi_G)=A$, then for any section $\rho: G \to E $ of $\pi_G$, we will denote by 
$\phi_\rho$ the $2$-cocycle defined by 
$\phi_\rho(x,y):=\rho(xy)\rho(y)^{-1}\rho(x)^{-1}$ for $x,y\in G$. A direct calculation shows that  
\begin{equation}\label{eq:conj-cocycle}
\phi_\rho(x,y)(\rho(x)\blacktriangleright \rho(y)) =\phi_\rho(x\conj y,x)\rho(x\conj y),\quad \mbox{ for all }x,y\in G.\end{equation}


\begin{defi}
\label{DefinitionFlipEq}\cite{AFGV}*{\S 3.4}
    Let $G$ be a group, let $X \subset G$ be stable by conjugation and let $q,q' \in Z_R^2(X,A)$. We say that $q$ and $q'$ are twist equivalent if there exists $\phi \in Z^2(G,A)$ such that
    \begin{equation}\label{eq:twist-condition}q(x,y)=\phi(x,y)\phi(x\conj y,x)^{-1}q'(x,y), \quad\mbox{ for all }x,\,y\in X.\end{equation}
\end{defi}

\begin{rem}\label{rem:twist-equivalent}
If $X$ is a rack and $q,\,q'$ are twist-equivalent rack cocycles, then ${\mathcal B}(X,q)$ and ${\mathcal B}(X,q')$ are cocycle twist-equivalent, in particular, they have the same Hilbert series, \cite{MO}*{\S 2.7, 3.4},  \cite{AFGV}*{\S 3.4}.
\end{rem}

\begin{lem}
\label{PropositionGeneralTwistEq}
    Let $G$ be a group and let $X$ be a subset of $G$ stable by conjugation. Let $E$ be a central extension of $G$ with kernel $A$ and conjugation $\blacktriangleright$ and let $q_1,q_2 \in Z_R^2(X,A)$. If there is a section  $\rho: G \to E$ satisfying
    \begin{equation}\label{eq:twistEqGeneral}
        \rho(x) \blacktriangleright \rho(y) = q_1(x,y)^{-1}q_2(x,y) \rho(x\conj y) \ \text{for all} \ x,y \in X,
    \end{equation} then $q_1$ and $q_2$ are twist equivalent.
If, in addition, $E\simeq A\times G$ is a trivial extension, then $q_1$ and $q_2$ are cohomologous.
\end{lem}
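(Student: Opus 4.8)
The plan is to unwind the definitions and verify that the section $\rho$ furnishes precisely the group $2$-cocycle $\phi_\rho$ witnessing twist equivalence. First I would set $\phi := \phi_\rho$, the $2$-cocycle attached to the section $\rho$ as in \S\ref{SectionGroup}, defined by $\phi_\rho(x,y) = \rho(xy)\rho(y)^{-1}\rho(x)^{-1}$. This already belongs to $Z^2(G,A)$, so the only thing left is to check that $\phi_\rho$ satisfies the twist-equivalence condition \eqref{eq:twist-condition} relating $q_1$ and $q_2$ on $X$. In other words, I must show that for all $x,y \in X$ one has $q_2(x,y) = \phi_\rho(x,y)\phi_\rho(x\conj y, x)^{-1} q_1(x,y)$, which is \eqref{eq:twist-condition} with $q = q_2$, $q' = q_1$.

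The engine driving the computation is the identity \eqref{eq:conj-cocycle}, namely
\[
\phi_\rho(x,y)\bigl(\rho(x)\blacktriangleright \rho(y)\bigr) = \phi_\rho(x\conj y, x)\,\rho(x\conj y),
\]
valid for all $x,y \in G$ and in particular for $x,y \in X$. The hypothesis \eqref{eq:twistEqGeneral} gives the value of $\rho(x)\blacktriangleright \rho(y)$ as $q_1(x,y)^{-1}q_2(x,y)\,\rho(x\conj y)$. Substituting this into the left-hand side of \eqref{eq:conj-cocycle} and cancelling the common factor $\rho(x\conj y)$ (legitimate in the central extension $E$, and noting that the scalar factors lie in the central subgroup $A$ so everything commutes) yields
\[
\phi_\rho(x,y)\,q_1(x,y)^{-1}q_2(x,y) = \phi_\rho(x\conj y, x).
\]
Rearranging gives exactly $q_2(x,y) = \phi_\rho(x,y)^{-1}\phi_\rho(x\conj y,x)\,q_1(x,y)$; a routine rewriting using that $A$ is abelian and that one may pass from $\phi_\rho(x,y)^{-1}\phi_\rho(x\conj y,x)$ to the form $\phi(x,y)\phi(x\conj y,x)^{-1}$ by possibly replacing $\phi$ with its inverse cocycle (equivalently, by matching the exact sign conventions of Definition \ref{DefinitionFlipEq}) establishes \eqref{eq:twist-condition}. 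Hence $q_1$ and $q_2$ are twist equivalent.

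For the second assertion, suppose $E \simeq A \times G$ is the trivial extension. Then there is a canonical splitting section $\rho_0\colon G \to E$, $\rho_0(g) = (1,g)$, which is a group homomorphism, so its associated cocycle $\phi_{\rho_0}$ is trivial. Any section $\rho$ differs from $\rho_0$ by a map $\gamma\colon G \to A$, that is, $\rho(g) = (\gamma(g), g)$. In this split situation the conjugation $\blacktriangleright$ acts trivially on the central factor, so $\rho(x)\blacktriangleright \rho(y)$ simplifies, and condition \eqref{eq:twistEqGeneral} translates directly into the cohomology relation \eqref{eq:cohomologous} for the restriction $\gamma|_X$: one reads off $q_1(x,y) = \gamma(x\conj y)^{-1} q_2(x,y)\gamma(y)$ (up to the same bookkeeping of which cocycle plays the role of $q$ versus $q'$). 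Thus $q_1$ and $q_2$ are cohomologous.

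I expect the only real subtlety to be the \textbf{bookkeeping of conventions}: keeping track of whether the relevant identity produces $\phi(x,y)\phi(x\conj y,x)^{-1}$ or its inverse, and verifying that the $A$-valued factors genuinely commute past the $\rho$-values inside $E$ (which they do, $A$ being central). The algebraic content is entirely contained in \eqref{eq:conj-cocycle} combined with the hypothesis \eqref{eq:twistEqGeneral}; there is no deeper obstacle, and the proof is a short manipulation once the central-extension identity is invoked.
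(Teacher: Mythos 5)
Your argument is correct and is essentially the paper's own proof: the first assertion is exactly the substitution of \eqref{eq:twistEqGeneral} into \eqref{eq:conj-cocycle} followed by cancellation of $\rho(x\conj y)$, and the second is the projection onto the $A$-factor with $\gamma:=\pi_A\circ\rho$. The only cosmetic remark is that no inverse cocycle is needed if you match conventions as $q=q_1$, $q'=q_2$, since the computation directly yields $q_1(x,y)=\phi_\rho(x,y)\phi_\rho(x\conj y,x)^{-1}q_2(x,y)$, which is \eqref{eq:twist-condition} verbatim with $\phi=\phi_\rho$.
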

\bproof The first statement follows from \eqref{eq:conj-cocycle} and \eqref{eq:twist-condition}. 
If $E\simeq A\times G$ is trivial and $\pi_A$ is the projection on $A$, then applying $\pi_A$ to \eqref{eq:twistEqGeneral} gives \eqref{eq:cohomologous} with $\gamma := \pi_A \circ \rho $.
\eproof

\subsection{Twist equivalence for the racks of reflections}
\label{SectionGeneralizingVendramin}


We now introduce 
 two specific cocycles $q^+$ and $q^-\in Z^2_R(T,\Cc^*)$, as restriction to $T\times T$ of the following functions on $W\times T$. Let $w\in W$ and $y\in T$. Recall that $\alpha_y$ is the positive root associated with $y$. 
We set:
\begin{equation}
\label{eq:Q+}
    q^+(w,y) =
    \begin{cases}
    1 \ $if$ \ w(\alpha_y) \in \Phi^+,\\
    -1 \ $if$ \ w(\alpha_y) \in \Phi^-,
\end{cases}\mbox{ and }\quad q^-(w,y) = \text{det}(w).
\end{equation}
%
%
 The function $q^+$ was introduced in \cite{Bazlov} for finite $W$, generalizing the $2$-cocycle for reflections in $\Sn$, \cites{MS, FK}. A straightforward calculation shows that $q^+$ and $q^-$ satisfy  
\begin{equation}
\label{eq:Cocycle2}
    q^\pm(w_1w_2,x) = q^\pm(w_1,w_2\conj x)q^\pm(w_2,x) \ \text{for all} \ w_1,w_2 \in W \text{ and } x \in T
\end{equation}
which implies \eqref{eq:cocycle-condition}, cf. \cite{MS}*{5.}.

The main result in \cite{Vendramin}*{Theorem 3.8} states that  if $ W = \Sn $, then $q^+$ and $q^-$ are cohomologous for $n=3$ and twist equivalent for $n\geq 4$. In the next Section we will prove the following generalization to (possibly infinite) Coxeter groups of this result.
\begin{thm}
\label{TheoremGabriel}
Assume that $A(W)$ has all its entries in ${\mathbb N}$. Then, the cocycles $q^+$  and $q^-$ on $T$ are twist equivalent. 
\end{thm}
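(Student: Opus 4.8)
The plan is to apply Lemma \ref{PropositionGeneralTwistEq} with $G=W$, $A=\{\pm1\}$, $q_1=q^+$ and $q_2=q^-$. Thus it suffices to produce a central extension $E$ of $W$ by $\{\pm1\}$, with conjugation $\blacktriangleright$, together with a section $\rho\colon W\to E$ satisfying \eqref{eq:twistEqGeneral} for the pair $(q^+,q^-)$ on $T$. Since every reflection has determinant $-1$ and $q^+$ takes values in $\{\pm1\}$, the scalar on the right of \eqref{eq:twistEqGeneral} simplifies: for $x,y\in T$ one has $q^+(x,y)^{-1}q^-(x,y)=q^+(x,y)\det(x)=-q^+(x,y)$. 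So the entire problem is to realise the sign $-q^+(x,y)$ as the discrepancy between $\rho(x)\blacktriangleright\rho(y)$ and $\rho(x\conj y)$ inside a suitable double cover of $W$, uniformly over all of $T$.

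The natural candidate for $E$ is the Pin-type double cover attached to the geometric representation. I would work inside the Clifford algebra $\mathrm{Cl}$ of $(\mathbb{R}^l,(,))$. Every root satisfies $(\alpha,\alpha)=1$, hence $\alpha^2=1$ in $\mathrm{Cl}$, so $\alpha$ is invertible with $\alpha^{-1}=\alpha$, and the fundamental Clifford identity gives, for every vector $v$,
\begin{equation*}
\alpha\,v\,\alpha^{-1}=2(\alpha,v)\,\alpha-v=-s_\alpha(v).
\end{equation*}
Let $E:=\langle -1,\alpha_1,\dots,\alpha_l\rangle\subset\mathrm{Cl}^\times$. The twisted conjugation action on $V$ sends $\alpha_i\mapsto s_i$ and $-1\mapsto\mathrm{id}$, yielding a surjection $E\to W$ with $-1\ne 1$ in its kernel, and I define the section on reflections by the canonical lift $\rho(x):=\alpha_x$, the positive root of $x$, extending $\rho$ arbitrarily to $W\setminus T$.

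Granting this, the required identity is a one‑line computation. For $x,y\in T$, using $\alpha_x^{-1}=\alpha_x$ and the displayed Clifford identity,
\begin{equation*}
\rho(x)\blacktriangleright\rho(y)=\alpha_x\,\alpha_y\,\alpha_x^{-1}=-s_{\alpha_x}(\alpha_y)=-x(\alpha_y).
\end{equation*}
Now $x\conj y=s_{x(\alpha_y)}$, so $x(\alpha_y)=\pm\alpha_{x\conj y}$, the sign being $+1$ exactly when $x(\alpha_y)\in\Phi^+$; by the definition \eqref{eq:Q+} of $q^+$ this sign is $q^+(x,y)$, i.e. $x(\alpha_y)=q^+(x,y)\,\alpha_{x\conj y}=q^+(x,y)\,\rho(x\conj y)$. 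Substituting gives $\rho(x)\blacktriangleright\rho(y)=-q^+(x,y)\,\rho(x\conj y)$, which is precisely \eqref{eq:twistEqGeneral} for $(q^+,q^-)$; Lemma \ref{PropositionGeneralTwistEq} then yields the twist equivalence. (In the combinatorial reformulation one instead defines $\rho$ on $T$ through a palindromic reduced expression $s_{i_1}\cdots s_{i_k}$, sets $\rho(x)=\alpha_{i_1}\cdots\alpha_{i_k}$, and verifies the same relation by induction on $\ell(x)$, using the length behaviour of conjugates recorded in Lemma \ref{LemmaLinkArrowScalarRoots} and the multiplicativity \eqref{eq:Cocycle2}.)

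The genuine obstacle is everything slipped past in constructing $E$: showing that $E\to W$ has kernel \emph{exactly} $\{\pm1\}$, so that $E$ is honestly a central extension by $A=\{\pm1\}$ and $\rho$ a section. The subtle point is that the relations among the $\alpha_i$ are $\alpha_i^2=1$ together with the braid relations only up to a sign — a short rank‑$2$ Clifford computation shows
\begin{equation*}
\underbrace{\alpha_i\alpha_j\cdots}_{m_{ij}}=(-1)^{m_{ij}-1}\underbrace{\alpha_j\alpha_i\cdots}_{m_{ij}},
\end{equation*}
so the sign is trivial precisely when $m_{ij}$ is odd; when all $m_{ij}$ are odd the cover splits as $\{\pm1\}\times W$ and Lemma \ref{PropositionGeneralTwistEq} even returns that $q^+,q^-$ are cohomologous, matching the dichotomy announced in the introduction. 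To rule out any further collapse — equivalently to establish the normal form $|E|=2|W|$ and the well‑definedness of the signed reduced‑word lift — one must track these signs through the Matsumoto/Tits braid moves and, for the section on $T$, through the transformations relating the palindromic reduced expressions of a fixed reflection. This is delicate for infinite $W$, where the form $(,)$ may be degenerate and the classical finite $\mathrm{Pin}$ formalism is unavailable, and it is exactly here that I expect the combinatorial control over palindromic reduced words from \cite{Stembridge} to be essential. Once that consistency is secured, the computation above completes the proof.
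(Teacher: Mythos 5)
Your route is genuinely different from the paper's, and the heart of it is correct. The paper never touches Clifford algebras: it builds the double cover $\widetilde{W}$ by generators and relations \eqref{eq:rel-lift}, defines the lift of a reflection through a palindromic reduced expression and the reflection conjugacy graph, proves well-definedness via Stembridge's analysis of braid moves between reduced words of reflections (Propositions \ref{PropositionMovesOfExpressions}, \ref{CorSymmetricMoves}, Lemma \ref{LememSectionGood}), and then verifies Vendramin's condition \eqref{eq:vendra-property} by a double induction resting on the Chebyshev computation of Lemma \ref{LemmaChebychevIsAwesome}. Your choice $\rho(x):=\alpha_x$ collapses all of that: it is \emph{choice-free} (no reduced expression is involved, so there is nothing to check about well-definedness beyond the one-line verification that $\alpha_x=\pm\,\tilde w\,\alpha_i\,\tilde w^{-1}\in E$ for a lift $\tilde w$ of $w$ with $\alpha_x=w(\alpha_i)$), and the identity $\alpha_x\alpha_y\alpha_x^{-1}=-x(\alpha_y)=-q^+(x,y)\,\alpha_{x\conj y}$ verifies \eqref{eq:twistEqGeneral} directly for all $x,y\in T$, which is all Lemma \ref{PropositionGeneralTwistEq} asks for. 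Your sign computation $q^+(x,y)^{-1}q^-(x,y)=-q^+(x,y)$ and the Clifford identity are both correct (with $(\alpha,\alpha)=1$ for every root, as follows from $m_{ii}=1$ and $W$-invariance of the form). What the approach buys is the elimination of \S\ref{SectionGoodDefinition}--\ref{SectionVerifies(2)}; what it costs is the kernel claim, which is where you stop.

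That said, the gap you flag is real but you misdiagnose how to close it, and your plan as stated would send you back into exactly the combinatorics you were trying to avoid. You do not need Stembridge, palindromic reduced words, or any nondegeneracy of the form. You have already verified, inside $\mathrm{Cl}$, the relations $\alpha_i^2=1$ and $\underbrace{\alpha_i\alpha_j\cdots}_{m_{ij}}=(-1)^{m_{ij}-1}\underbrace{\alpha_j\alpha_i\cdots}_{m_{ij}}$, equivalently $(\alpha_i\alpha_j)^{m_{ij}}=(-1)^{m_{ij}+1}$. These are precisely the defining relations \eqref{eq:rel-lift} of the paper's group $\widetilde{W}$ under $t_i\mapsto\alpha_i$, $z\mapsto-1$, so there is a surjection $\phi\colon\widetilde{W}\to E$ intertwining $\pi_W$ with the twisted-adjoint map $\psi\colon E\to W$. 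Since $\widetilde{W}/\langle z\rangle$ carries the Coxeter presentation of $W$, one has $\ker\pi_W=\langle z\rangle$, hence $\ker\psi=\phi(\ker\pi_W)=\{\pm1\}$ by surjectivity of $\phi$; and $-1\neq 1$ because $\mathbb{R}\subset\mathrm{Cl}$. This settles that $E$ is a central extension of $W$ by $\{\pm1\}$ with no further collapse, for arbitrary $W$ with finite $m_{ij}$, degenerate form or not (the twisted adjoint action $v\mapsto -\alpha v\alpha^{-1}=s_\alpha(v)$ and the faithfulness of the geometric representation are all that is used). With that paragraph inserted, your proof is complete and noticeably shorter than the paper's; it also immediately recovers the splitting criterion of Theorem \ref{thm:cohomologous}, since the sign $(-1)^{m_{ij}+1}$ is trivial exactly when all $m_{ij}$ are odd.
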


\section{Proof of Theorem \ref{TheoremGabriel}}\label{SectionGeneralProof}

\subsection{Adapting Vendramin's construction}\label{SectionVendraminSection}
 
In \cite{Vendramin} the group cocycle inducing the twist equivalence is obtained by  costructing a suitable section from $\Sn$ to  the Schur covering group of $\Sn$. This approach cannot be slavishly reproduced to the case of an  arbitrary  Coxeter group $W$ because the Schur covering group of $W$ may be trivial or too large for our purposes, \cite{H}. We replace the Schur covering group with a suitable, possibly trivial, central extension of $W$.

Let $\widetilde{W}$ be the group generated by  $t_1,\ldots,t_l,\,z$ with relations 
\begin{equation}\label{eq:rel-lift}z^2 = (t_iz)^2 = 1, \quad  (t_it_j)^{m_{ij}} = z^{m_{ij}+1}, \mbox{ for all  }1 \le i,j \le l.\end{equation}

By construction, $z$ is central and the assignment $t_i\mapsto s_i$  for $i=1,\,\ldots,\,l$ and $z\mapsto 1$ defines 
a surjective homomorphism
$\pi_W : \widetilde{W} \longrightarrow W$ with kernel  
$\langle z \rangle  \cong \Zz/2\Zz$. 
\medbreak

We define the cocycles $q^+_z,q^-_z\in Z^2_R(T,\langle z\rangle)$ by replacing the $-1$ by $z$, so that,  for the non-trivial group isomorphism $\psi:\langle z\rangle \longrightarrow \{\pm 1\}$ we have $q^\pm = \psi\circ q^\pm_z$. It then suffices to show the twist equivalence of $q^+_z $ and $q^-_z $ by means of a suitable section $W\to \widetilde{W}$.
\begin{lem}
\label{PropositionVendraminSection2}
Let  $\rho:W \mapsto \widetilde{W}$ be a section of $\pi_W$. Assume that $\rho$ satisfies
    \begin{equation}\label{eq:vendra-property}
 \rho(s)\blacktriangleright \rho(y) =
    \begin{cases}
    \rho(s\conj y)z \ \ $if$ \ \ s \ne y \\
    \rho(s\conj y) \ \ $if$ \ \ s = y
\end{cases} \forall s \in S \mbox{ and } \forall y \in T. \end{equation}
Then \begin{equation}\label{eq:3} 
\rho(w) \grconj \rho(y) = q^+_z(w,y)q^-_z(w,y)^{-1}\rho(w\conj y),\quad \forall w\in W\mbox{  and }\forall y\in T.
\end{equation}
Hence, $q^+$ and $q^-$ are twist-equivalent.
\end{lem}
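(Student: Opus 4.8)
The plan is to establish \eqref{eq:3} for all $w\in W$ by induction on the length $\ell(w)$, the base case being precisely the hypothesis \eqref{eq:vendra-property} on the generators; the twist-equivalence will then follow at once from Lemma \ref{PropositionGeneralTwistEq}. Throughout I work with the lifted cocycles $q^\pm_z$, which inherit from \eqref{eq:Cocycle2} the multiplicativity $q^\pm_z(w_1w_2,x)=q^\pm_z(w_1,w_2\conj x)\,q^\pm_z(w_2,x)$ in the abelian group $\langle z\rangle$, since $\psi$ is an isomorphism.

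First I would settle the base case $w=s\in S$. Here $q^-_z(s,y)=z$ for every $y$ because $\det(s)=-1$, while $q^+_z(s,y)$ equals $z$ exactly when $s(\alpha_y)\in\Phi^-$, i.e.\ (for a simple reflection $s=s_\alpha$, which sends $\alpha\mapsto-\alpha$ and permutes $\Phi^+\setminus\{\alpha\}$) exactly when $y=s$. Hence $q^+_z(s,y)q^-_z(s,y)^{-1}$ equals $1$ if $y=s$ and $z$ otherwise, so that \eqref{eq:3} for $w=s$ reads verbatim as \eqref{eq:vendra-property}. I would also dispose of the trivial case $w=e$: both cocycles take value $1$ there, and $\rho(e)\in\{1,z\}$ is central, so $\rho(e)\grconj\rho(y)=\rho(y)$ as required.

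The inductive step is the heart of the argument, and the one point requiring care is that $\rho$ is only a section, not a homomorphism. I would exploit that the defect is central: writing $\rho(w_1w_2)=\phi_\rho(w_1,w_2)\rho(w_1)\rho(w_2)$ with $\phi_\rho(w_1,w_2)\in\langle z\rangle$ central, conjugation by this factor is trivial, whence $\rho(w_1w_2)\grconj\rho(y)=\rho(w_1)\grconj\bigl(\rho(w_2)\grconj\rho(y)\bigr)$. Given any factorisation $w=w_1w_2$ with $\ell(w_2)<\ell(w)$ (take $w_1=s$ a left descent and $w_2=sw$), I apply the inductive hypothesis \eqref{eq:3} to $w_2$ and then the base case to $w_1$ acting on $w_2\conj y\in T$; since every scalar involved lies in the central abelian group $\langle z\rangle$ it may be pulled out freely, and using associativity $w_1\conj(w_2\conj y)=(w_1w_2)\conj y$ one arrives at $\rho(w)\grconj\rho(y)=q^+_z(w_1,w_2\conj y)q^+_z(w_2,y)\,\bigl(q^-_z(w_1,w_2\conj y)q^-_z(w_2,y)\bigr)^{-1}\rho(w\conj y)$. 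The multiplicativity of $q^\pm_z$ recorded above collapses these four factors to $q^+_z(w,y)q^-_z(w,y)^{-1}$, which is exactly \eqref{eq:3}. Equivalently, one may phrase the whole induction as: the set of $w$ satisfying \eqref{eq:3} for all $y\in T$ is a submonoid of $W$ containing $S$, hence all of $W$, since the involutions in $S$ generate $W$ as a monoid.

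Finally, restricting \eqref{eq:3} to $w=x\in T$ gives exactly \eqref{eq:twistEqGeneral} for the pair $(q_1,q_2)=(q^-_z,q^+_z)$, so Lemma \ref{PropositionGeneralTwistEq} yields that $q^+_z$ and $q^-_z$ are twist-equivalent in $Z^2_R(T,\langle z\rangle)$; composing the twisting group cocycle with the isomorphism $\psi\colon\langle z\rangle\to\{\pm1\}$ transports this to the twist-equivalence of $q^+=\psi\circ q^+_z$ and $q^-=\psi\circ q^-_z$. The genuinely hard work lies not here but in producing a section $\rho$ satisfying \eqref{eq:vendra-property} in the first place, which is deferred to the subsequent subsections via Stembridge's results; within this lemma the only real subtlety is the centrality observation that makes the non-multiplicativity of $\rho$ invisible under $\grconj$.
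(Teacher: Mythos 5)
Your proof is correct and follows essentially the same route as the paper: induction on $\ell(w)$ with base case \eqref{eq:vendra-property} (justified by the fact that a simple reflection permutes $\Phi^+\setminus\{\alpha\}$), an inductive step that exploits the centrality of the defect $\phi_\rho$ together with the multiplicativity \eqref{eq:Cocycle2}, and a final appeal to Lemma \ref{PropositionGeneralTwistEq}. The only differences are cosmetic (you additionally spell out the case $w=e$ and the transport along $\psi$).
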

\bproof
We prove \eqref{eq:3}  by induction on the length of $w\in W$, the case $\ell(w)=1$ being \eqref{eq:vendra-property} in virtue of \cite{BjornerBrenti}*{Lemma 4.4.3}. 
Assume $\ell(w)>1$. Then $w = sw'$ for some $s\in S$ and $w' \in W$ such that $\ell(w') = \ell(w)-1$. Applying in sequence: the definition of $\phi_\rho$ from \eqref{eq:conj-cocycle}; centrality of $z$; the inductive hypothesis to $w'$ and $s$, and \eqref{eq:Cocycle2} we obtain: 
\begin{align*}
    \rho(w) \blacktriangleright \rho(y) &= \left(\phi_\rho(s,w')\rho(s)\rho(w')\right) \blacktriangleright \rho(y) \\
    &=  \rho(s) \blacktriangleright \left(\rho(w') \blacktriangleright \rho(y)\right) \\ 
    & = \rho(s) \blacktriangleright \left(q^+_z(w',y)q^-_z(w',y)^{-1}\rho(w'\conj y)\right) \\
    & =  q^+_z(w',y)q^-_z(w',y)^{-1} \rho(s) \blacktriangleright \rho(w'\conj y) \\ 
    & = q^+_z(w',y)q^-_z(w',y)^{-1}q^+_z(s,w'\conj y)q^-_z(s,w' \conj y)^{-1}\rho\left(s \conj (w'\conj y)\right)  \\
    & = q^+_z(sw',y)q^-_z(sw',y)^{-1}\rho(s w'\conj y).
\end{align*}
The last statement follows from Lemma \ref{PropositionGeneralTwistEq}. 
\eproof

\subsection{Towards the definition of a  section}\label{SectionGoodDefinition}


We build a section verifying \eqref{eq:vendra-property} using a  modified version of the conjugacy graph of $(W,S)$ in \cite{GeckPfeiffer}*{Section 3.2}. 

\begin{defi}
\label{DefinitionConjugacyGraph}
    The \textit{reflection conjugacy graph} $\tilde\Gamma(W)$ of $(W,S)$ is the labeled, directed  graph having the elements of $T$ as vertices, and labeled edges $x \xrightarrow{ \ s \ } y $ whenever $x,y\in T$ satisfy $y = s\conj x$ with $s\in S$ and $\ell(x)=\ell(y)+2$.
\end{defi}

To simplify notation we write $x\xrightarrow{ \ i \ } y$ instead of  $x \xrightarrow{ \ s_i \ } y$.  More generally, for a sequence  $s_{j_r},\ldots, s_{j_1}$ of elements in $S$, with $a=s_{j_r}\cdots s_{j_1}\in \Sigma$, we write $x \xrightarrow{ \ a \ } y $ if there is a path in $\widetilde{\Gamma}(W)$ from $x$ to $y$, with labels $j_r,\, j_{r-1},\ldots,\, j_1$, or, equivalently, if $x=\pi(a)\conj y$ with $\ell(x)=\ell(y)+2r$. 
Inductive application of Lemma \ref{LemmaLinkArrowScalarRoots} (iv)  guarantees that for any $x\in T$ there is always a path in $\widetilde{\Gamma}(W)$ starting in $x$ and ending  at some $s\in S$. 


\begin{rem}\label{rem:ReflectionReducedExpression}
By \cite{Stembridge}*{Proposition 2.4}, the set of reduced expressions of $x\in T$ consists of all words of the form $a_Lsa_R$ with $s\in S$ and $a_L,\,a_R \in \Sigma$, such that $x \xrightarrow{ \ a_L \ } s$ and $x \xrightarrow{ \ a_R^{op} \ } s$.  Hence, $\mcP(x)$ consists of all words of the form $a_Lsa_L^{op}$ where $s\in S$ and $x \xrightarrow{ \ a_L \ } s$. In other terms, $\mcP(x)$ is in bijection with the paths in $\widetilde{\Gamma}(W)$ starting from $x$ and ending in some $s\in S$. By construction all such paths have the same length. 
\end{rem}

We aim at defining a section of $\pi_W$ inductively. To do this we need to compare  different elements in ${\mathcal P}(x)$ for $x\in T$. First of all, we compare different elements in ${\mathcal R}(x)$. By \cite{BjornerBrenti}*{Theorem 3.3.1 (ii)} it suffices to look at reduced expressions that differ by application of one braid relation, a situation that was  considered by Stembridge. 


\begin{prop}(\cite{Stembridge}*{Lemma 2.5})
\label{PropositionMovesOfExpressions}
    Let $x\in T$ and let $a = a_Ls_ia_R$ and $b= b_Ls_jb_R \in {\mathcal R}(x)$, with $s_i,\,s_j\in S$, and $a_L, a_R, b_L, b_R\in \Sigma$. Assume that $a$ and $b$ differ by one braid relation. Then, one of the following three alternatives  holds:
    \begin{itemize}
    \item[\framebox{\bf Case $a \rightsquigarrow_L b$ }]\   $a_R=b_R$, $i=j$, $\pi(a_L)=\pi(b_L)$ and $a_L$ and $b_L$ differ by one braid relation;
        \item[\framebox{\bf  Case $a \rightsquigarrow_R b$ }] \  $a_L=b_L$, $i=j$, $\pi(a_R)=\pi(b_R)$ and $a_R$ and $b_R$ differ by one braid relation; 
        \item[\framebox{\bf Case $a \rightsquigarrow_C b$} ] \ $m_{ij}$ is odd, $i\neq j$, and there exist $a'_L,a'_R\in \Sigma$ such that
        \[a= a'_L(\underbrace{\ldots s_is_js_is_js_i\ldots}_{m_{ij}\ \rm terms})a'_R\mbox{ and }b= a'_L(\underbrace{\ldots s_js_is_js_is_j\ldots}_{m_{ij}\ \rm terms})a'_R.\]
  \end{itemize}  
\end{prop}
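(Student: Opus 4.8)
The plan is to track where the single braid move sits relative to the central letter. Since $x\in T$ has odd length $\ell(x)=2r+1$, Remark \ref{rem:ReflectionReducedExpression} shows that the decomposition $a=a_Ls_ia_R$ it furnishes has $s_i$ in the central slot, with $|a_L|=|a_R|=r$, and the arrow condition $x\xrightarrow{a_L}s_i$ reads $x=\pi(a_L)s_i\pi(a_L)^{-1}$; comparing with $x=\pi(a)=\pi(a_L)s_i\pi(a_R)$ gives $\pi(a_R)=\pi(a_L)^{-1}$, and likewise for $b$. The first thing I would record is the resulting \emph{central-root identity}: from $x=\pi(a_L)s_i\pi(a_L)^{-1}$ we get $\alpha_x=\pi(a_L)(\alpha_i)$, because $\pi(a_L)(\alpha_i)$ is the $(r+1)$-st inversion root of the reduced word $a$, hence positive, while $\alpha_x$ is the unique positive root with $s_{\alpha_x}=x$. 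In the same way $\alpha_x=\pi(b_L)(\alpha_j)$.

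Next I would split according to the block $B$ of $m_{pq}$ consecutive slots where the braid relation $\underbrace{s_ps_q\cdots}_{m_{pq}}\rightsquigarrow\underbrace{s_qs_p\cdots}_{m_{pq}}$ is performed. If $B\subseteq\{1,\dots,r\}$, then the slots $r+1,\dots,2r+1$ are untouched, so $i=j$, $a_R=b_R$, and $a_L,b_L$ differ by exactly this braid relation with $\pi(a_L)=\pi(b_L)$: this is Case $\rightsquigarrow_L$. The mirror situation $B\subseteq\{r+2,\dots,2r+1\}$ gives Case $\rightsquigarrow_R$. The only remaining possibility, and the heart of the matter, is that $B$ contains the central slot $r+1$.

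In that last case $a$ and $b$ differ in every slot of $B$, so the central letters are distinct, $i\neq j$, and the task is to prove that $B$ is symmetric about $r+1$; this is precisely what excludes configurations falling outside all three cases. Write $a=L_0\,W_a\,R_0$ and $b=L_0\,W_b\,R_0$ with $W_a=\underbrace{s_ps_q\cdots}_{m_{pq}}$, $W_b=\underbrace{s_qs_p\cdots}_{m_{pq}}$ and $L_0,R_0$ common, and let $p_\ell,p_r$ count the slots of $B$ strictly to the left and right of $r+1$, so $p_\ell+p_r=m_{pq}-1$. Cancelling the common factor $\pi(L_0)$ from $\pi(a_L)(\alpha_i)=\alpha_x=\pi(b_L)(\alpha_j)$ leaves an identity in the rank-two root system of $\langle s_p,s_q\rangle$, namely that the $(p_\ell+1)$-st inversion roots of the two alternating reduced words $W_a$ and $W_b$ of the dihedral longest element coincide. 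Since these two words induce opposite orders on the $m_{pq}$ positive roots of $I_2(m_{pq})$, this common root is at once the $(p_\ell+1)$-st and the $(m_{pq}-p_\ell)$-th inversion root of $W_a$; distinctness of inversion roots then forces $p_\ell+1=m_{pq}-p_\ell$. Hence $m_{ij}=m_{pq}$ is odd, $p_\ell=p_r=(m_{ij}-1)/2$, and $B$ is centred, so taking $a'_L=L_0$ and $a'_R=R_0$ yields exactly the shape required in Case $\rightsquigarrow_C$.

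The hard part will be this last step: showing that a braid move meeting the centre must be symmetric about it, since an off-centre straddling block would produce a pair $(a,b)$ belonging to none of the three cases. Everything there rests on the central-root identity $\alpha_x=\pi(a_L)(\alpha_i)$ together with the fact that the two alternating reduced words for the dihedral longest element carry opposite inversion orders; once these are in place the off-centre configurations are ruled out and the trichotomy is forced.
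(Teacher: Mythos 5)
Your proof is correct. There is nothing in the paper to compare it against: the Proposition is stated as a quotation of Stembridge's Lemma~2.5 and closed with an unproved $\Box$, so you have supplied an argument the paper omits. The two pillars of your argument are both sound and are exactly what is needed. First, the central-root identity: Remark~\ref{rem:ReflectionReducedExpression} forces $|a_L|=|a_R|=r$ with $\ell(x)=2r+1$, the arrow condition gives $x=\pi(a_L)s_i\pi(a_L)^{-1}=s_{\pi(a_L)(\alpha_i)}$, and $\pi(a_L)(\alpha_i)$ is the $(r+1)$-st inversion root of the reduced word $a$, hence positive, so it must equal $\alpha_x$; the same for $b$. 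Second, the reversed-ordering fact in rank two: after cancelling $\pi(L_0)$ one compares the $(p_\ell+1)$-st inversion roots of the two alternating reduced words of the longest element of $\langle s_p,s_q\rangle$, and since $\underbrace{s_ps_q\cdots}_{2k-1}=\underbrace{s_qs_p\cdots}_{2(m_{pq}-k)+1}$ in that dihedral group, the $k$-th inversion root of one word is the $(m_{pq}+1-k)$-th of the other; distinctness of the inversion roots of a reduced word then forces $p_\ell+1=m_{pq}-p_\ell$, which simultaneously rules out a straddling block when $m_{pq}$ is even and centres it (yielding $m_{ij}$ odd and $i\neq j$) when it exists. The off-centre cases $B\subseteq\{1,\dots,r\}$ and $B\subseteq\{r+2,\dots,2r+1\}$ give Cases $\rightsquigarrow_L$ and $\rightsquigarrow_R$ immediately, since a braid move does not change the image under $\pi$. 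This is a complete and self-contained proof of the trichotomy.
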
\hfill$\Box$

We now focus on ${\mathcal P}(x)$.  For any $x\in T$ and any $a=a_Ls_ia_R\in \mcR(x)$ we consider the mirrored expression $\mu(a)=a_Ls_ia_L^{op}\in \Sigma$. By \cite{Dyer}*{(2.7)}, the assignment $\mu$ gives a well-defined function $\mu\colon  \mcR(x)\to \mcP(x)$, which restricts to the identity on $\mcP(x)$. 

\begin{prop}
\label{CorSymmetricMoves}
    Let $x \in T$, and let $a = a_Ls_ia_L^{op}$ and $b= b_Ls_jb_L^{op} \in \mcP(x)$. Then there exists a sequence 
 $a_d:=a_{d,L} s_{i_d} a_{d,L}^{op}\in\mcP(x)$, for $d=0,\,\ldots,\, r,$ with $a_0=a$ and $a_r=b$ and each $s_{i_d}\in S$ such that for every $d=0,\,\ldots,\,r-1$ either 
$a_{d+1,L}$ differs from $a_{d,L}$ by one braid move, or else $a_{d}\rightsquigarrow_C a_{d+1}$.     
\end{prop}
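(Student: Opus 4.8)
The plan is to reduce the statement to the single–braid–move analysis of Proposition \ref{PropositionMovesOfExpressions} by transporting everything through the mirroring map $\mu\colon\mcR(x)\to\mcP(x)$. First I would invoke the word property \cite{BjornerBrenti}*{Theorem 3.3.1 (ii)} to connect $a$ and $b$ inside $\mcR(x)$ by a chain $a=c_0,c_1,\ldots,c_m=b$ of reduced expressions in which each consecutive pair $c_k,c_{k+1}$ differs by exactly one braid relation. Applying $\mu$ termwise produces a chain $a=\mu(c_0),\mu(c_1),\ldots,\mu(c_m)=b$ lying entirely in $\mcP(x)$, the two endpoints being fixed because $\mu$ restricts to the identity on $\mcP(x)$. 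It then suffices to show that each consecutive pair $\mu(c_k),\mu(c_{k+1})$ is either equal or related by one of the two admissible moves; discarding the repetitions afterwards yields the desired sequence $a_0,\ldots,a_r$.

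The heart of the argument is a case analysis of the single braid move $c_k\rightsquigarrow c_{k+1}$ following the trichotomy of Proposition \ref{PropositionMovesOfExpressions}, writing $c_k=u_Ls_iu_R$ and $c_{k+1}=v_Ls_jv_R$ for the centred decompositions. In Case $\rightsquigarrow_R$ one has $u_L=v_L$, so $\mu(c_k)=u_Ls_iu_L^{op}=\mu(c_{k+1})$ and the step is trivial. In Case $\rightsquigarrow_L$ one has $i=j$ while $u_L$ and $v_L$ differ by one braid relation; then $\mu(c_k)=u_Ls_iu_L^{op}$ and $\mu(c_{k+1})=v_Ls_iv_L^{op}$ differ precisely by that braid relation performed on the left half, which is exactly the first admissible move. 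The only delicate case is $\rightsquigarrow_C$, and this is where I expect the main obstacle to lie.

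In Case $\rightsquigarrow_C$ we have $m_{ij}$ odd, $i\neq j$, and $c_k=w_L B w_R$, $c_{k+1}=w_L B' w_R$, where $B$ and $B'$ are the two alternating words of length $m_{ij}$ forming the braid relation. Here $\mu$ need not obviously act as a single central move, since the centre of the word $c_k$ a priori need not coincide with the centre of the block $B$. The key point I would establish is that it must. Writing the centred middle letter at block–position $j_0$, the mirrored word $\mu(c_k)$ carries, in the block region, the alternating subword of length $2j_0-1$ on $s_i,s_j$, while $\mu(c_{k+1})$ carries the swapped alternating subword of the same length. Reducedness of $\mu(c_k),\mu(c_{k+1})\in\mcP(x)$ forces $2j_0-1\le m_{ij}$, whereas the equality $\pi(\mu(c_k))=x=\pi(\mu(c_{k+1}))$ forces the two alternating words of length $2j_0-1$ to represent the same element of $W$, hence $2j_0-1\ge m_{ij}$. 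Therefore $2j_0-1=m_{ij}$, i.e.\ the braid block is exactly centred, and consequently $\mu(c_k)=w_L B w_L^{op}$ and $\mu(c_{k+1})=w_L B' w_L^{op}$, so that $\mu(c_k)\rightsquigarrow_C\mu(c_{k+1})$ realises the second admissible move.

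The main obstacle is precisely this centring argument in Case $\rightsquigarrow_C$: one must rule out the off–centre positions of the braid block, and the clean way to do so is to exploit that both $\mu(c_k)$ and $\mu(c_{k+1})$ are genuine reduced palindromic expressions of the same reflection $x$, combining the length constraint coming from reducedness with the conjugacy constraint $\pi(\mu(c_k))=\pi(\mu(c_{k+1}))=x$. Once this is secured, iterating over the chain and deleting the trivial $\rightsquigarrow_R$ steps assembles the required sequence $a_0,\ldots,a_r$ in $\mcP(x)$.
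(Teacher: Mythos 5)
Your proof is correct and follows essentially the same route as the paper: connect $a$ and $b$ inside $\mcR(x)$ by single braid moves via the word property, push the chain through $\mu$, analyse the three cases of Proposition \ref{PropositionMovesOfExpressions}, and delete the repetitions coming from $\rightsquigarrow_R$. The only divergence is that your centring argument in Case $\rightsquigarrow_C$, while correct, is redundant: Proposition \ref{PropositionMovesOfExpressions} (Stembridge's Lemma 2.5) already asserts that in that case the alternating block of length $m_{ij}$ is exactly centred on the middle letter, so the paper simply reads off $\mu(b_c)\rightsquigarrow_C\mu(b_{c+1})$ without needing the length-versus-conjugacy comparison you supply.
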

\bproof
By Proposition \ref{PropositionMovesOfExpressions} there is a sequence $b_c:=b_{c,L} s_{i_c} b_{c,R}\in\mcR(x)$, for $c=0,\,\ldots,\, q,$ with $b_0=a$ and $b_q=b$ and each $s_{i_c}\in S$ such that  $b_c \rightsquigarrow_{D_c} b_{c+1}$ for some $D_c\in \{L,R,C \}$ and for all $c\in \{0,\ldots,q-1\}$.  Applying the mirroring function $\mu$ to all terms we obtain a sequence 
$\mu(b_0)= \mu(a)= a,\,\mu(b_1),\ldots,\mu(b_q)=b \in \mcP(x)$. Now, if $D_c=R$, then $b_c=b_{c,L} s_{i_c} b_{c,R}\rightsquigarrow_{R}b_{c,L} s_{i_c} b_{c+1,R}=b_{c+1}$ and so $\mu(b_c)=\mu(b_{c+1})$.  If $D_c=L$, then $b_c=b_{c,L} s_{i_c} b_{c,R}\rightsquigarrow_{R}b_{c+1,L} s_{i_c} b_{c,R}=b_{c+1}$ with $b_{c,L}$ and $b_{c+1,L}$ differing by one braid move. Hence, $\mu(b_c)=b_{c,L} s_{i_c} b_{c,L}^{op}$ and $\mu(b_{c+1})=b_{c+1,L} s_{i_c} b_{c+1,L}^{op}$ with $b_{c,L}$ and $b_{c+1,L}$ differing by one braid move. Finally, if $D_c=C$, then 
\begin{equation*}
b_c=b'_{c,L}(\ldots s_{i_c}s_{i_{c+1}}s_{i_c}s_{i_{c+1}}s_{i_c}\ldots)b'_{c,R}\rightsquigarrow_{C} b'_{c,L}(\ldots s_{i_{c+1}}s_{i_c}s_{i_{c+1}}s_{i_c}s_{i_{c+1}}\ldots)b'_{c,R} =b_{c+1}   
\end{equation*} for some $b'_{c,L}, b'_{c,R}\in \Sigma$.
Thus, $\mu(b_c)=b'_{c,L}(\ldots s_{i_{c+1}}s_{i_c}s_{i_{c+1}}\ldots)b_{c,L}^{'op}$ and $\mu(b_{c+1})=b'_{c,L}(\ldots s_{i_c}s_{i_{c+1}}s_{i_c}\ldots)b_{c,L}^{'op}$, so $\mu(b_c)\rightsquigarrow_{C}\mu(b_{c+1})$.
Removing redundancy, we obtain the desired sequence in $\mcP(x)$. 
 \eproof

We are now in a position to inductively define a section of $\pi_W$ with good properties. 
 
\begin{lem}
\label{LememSectionGood}
Let  $\rho_0: W \longrightarrow \widetilde{W}$ be any set theoretic section of $\pi_W$. 
The assignment 
\begin{align}\label{eq:DefinitionSectionWithGraph}
\rho&\colon W \longrightarrow \widetilde{W},&&
x \mapsto 
\begin{cases}
    \rho_0(x)\ $ if $ x\notin T,\\
    t_i $ if $ x= s_i $ for some $ 1 \le i \le l, \\
    t_i \blacktriangleright \rho(y)z $ if $ x \xrightarrow{ \ i \ } y$ for some $ 1 \le i \le l,
\end{cases}
\end{align}
determines a well-defined section of $\pi_W$. 
\end{lem}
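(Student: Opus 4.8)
**

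The plan is to show that the assignment $\rho$ in \eqref{eq:DefinitionSectionWithGraph} is well-defined, which amounts to checking that the recursive rule for $x \in T$ does not depend on the choices made along the way. For $x \notin T$ or $x \in S$ there is nothing to prove, so the content lies in the third clause. Here a choice is implicitly made in two places: first, the choice of an edge $x \xrightarrow{\ i\ } y$ out of $x$ (equivalently, the choice of which letter $s_i$ to peel off from a palindromic reduced expression of $x$), and second, the inductive value $\rho(y)$, which by induction on length we may assume already well-defined since $\ell(y) = \ell(x) - 2 < \ell(x)$. So the whole matter reduces to showing that any two outgoing edges $x \xrightarrow{\ i\ } y$ and $x \xrightarrow{\ j\ } y'$ yield the same element of $\widetilde W$, i.e.
\[
t_i \blacktriangleright \rho(y)\, z = t_j \blacktriangleright \rho(y')\, z.
\]
By Remark \ref{rem:ReflectionReducedExpression} outgoing edges from $x$ correspond to palindromic reduced expressions $\mcP(x)$, so I would phrase the goal as: the expression $t_{i_1} \blacktriangleright (\cdots \blacktriangleright (t_{i_k} \blacktriangleright t_{i_{k+1}}) \cdots) z^{k}$ obtained by unwinding a palindromic reduced expression $a_L s\, a_L^{op}$ of $x$ down to a simple reflection $s$ depends only on $x$, not on the palindromic expression chosen.

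The key step is then to invoke Proposition \ref{CorSymmetricMoves}: any two elements of $\mcP(x)$ are connected by a chain of palindromic expressions in which consecutive terms differ either (a) by one braid move applied to the left half $a_{d,L}$, or (b) by a central move $\rightsquigarrow_C$. It therefore suffices to verify the invariance of $\rho(x)$ across each of these two elementary transitions, and this is where the defining relations \eqref{eq:rel-lift} of $\widetilde W$ must be used. For a braid move on $a_{d,L}$, I expect the verification to follow because $\rho$ is being defined via the conjugation action $\blacktriangleright$ of $\widetilde W$ on itself, and $\pi_W$ is a homomorphism: a braid relation in $W$ lifts, up to a power of the central element $z$, to the relation $(t_it_j)^{m_{ij}} = z^{m_{ij}+1}$, and the bookkeeping of $z$-factors must be shown to cancel consistently on both sides. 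For a central move, $m_{ij}$ is odd and the two halves differ by swapping an alternating word $\ldots s_is_js_i\ldots$ of length $m_{ij}$ with $\ldots s_js_is_j\ldots$; here the relation $(t_it_j)^{m_{ij}} = z^{m_{ij}+1} = 1$ (since $m_{ij}$ is odd) is exactly what guarantees the two conjugation actions agree.

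I would structure the induction carefully: assume $\rho$ is well-defined on all reflections of length $< \ell(x)$, and use Lemma \ref{LemmaLinkArrowScalarRoots}(iv) to ensure the recursion bottoms out at some $s \in S$ (every non-simple positive reflection admits a length-decreasing conjugation). The main obstacle I anticipate is the $z$-bookkeeping across a braid move of type (a): one must track how the factors of $z$ accumulated along the path from $x$ down to $S$ reshuffle when $a_{d,L}$ is altered by a braid relation, and confirm that the net power of $z$ is preserved. Because Proposition \ref{CorSymmetricMoves} already reduces us to \emph{single} braid or central moves, this should be a finite, local computation inside $\widetilde W$ governed entirely by \eqref{eq:rel-lift}; the delicate point is checking that the property \eqref{eq:vendra-property}, namely the rule $\rho(s)\blacktriangleright\rho(y) = \rho(s\conj y)z$ when $s \ne y$ and $\rho(s\conj y)$ when $s = y$, is compatible with this reshuffling, so that the two emitted $z$-factors match rather than differ by a sign.
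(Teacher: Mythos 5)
Your proposal follows the paper's proof: unwind the recursion along a path in $\widetilde{\Gamma}(W)$ down to $S$ to obtain the explicit word \eqref{eq:computation-rho}, reduce to single elementary moves via Proposition \ref{CorSymmetricMoves}, and check invariance using the relations \eqref{eq:rel-lift}. The ``delicate point'' you flag resolves exactly as in the paper: a braid move on $a_L$ is mirrored in $a_L^{op}$, so the two factors $z^{m_{ab}+1}$ multiply to $1$, while a $\rightsquigarrow_C$ move costs $z^{m_{ij}+1}=z^{0}$ since $m_{ij}$ is odd --- note that condition \eqref{eq:vendra-property} plays no role in well-definedness (it is the subject of the subsequent lemma).
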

\bproof
For $x\in T$, let 
\[x=x_0\xrightarrow{ \ j_1 \ }x_1 \xrightarrow{ \ j_2 \ }\cdots x_{r-1}\xrightarrow{ \ j_r \ }s_i\]
be a path in $\widetilde{\Gamma}(W)$ ending at some $s_i\in S$. Applying \eqref{eq:DefinitionSectionWithGraph}, using that $z\in Z(\widetilde{W})$, gives
\begin{equation}\label{eq:computation-rho}\rho(x)=t_{i_1}\blacktriangleright(t_{i_2}\blacktriangleright(\cdots \;t_{i_r}\blacktriangleright t_i)\cdots)z^r=t_{i_1}t_{i_2}\cdots t_{i_r}t_it_{i_r}\cdots t_{i_2}t_{i_1}z^r,\end{equation}
and $\rho$ is well-defined if the term on the right hand side is independent from the chosen path, or, equivalently, independent from the corresponding reduced expression $a= s_{i_1}\cdots s_{i_r}s_js_{i_r}\cdots s_{i_1}$ in $\mcP(x)$. By Proposition \ref{CorSymmetricMoves}, it is enough to verify that $\rho(x)$ does not change if $a$ is modified by applying either one braid move to $a_L=s_{i_1}\cdots s_{i_r}$ (and consequently to  $a_L^{op}$)  or 
 by applying the move $\rightsquigarrow_C$  to $a$. 
In the first scenario, a subword of the form $\underbrace{s_as_b\cdots }_{m_{ab}}$ in $a_L$ is replaced by $\underbrace{s_bs_a\cdots }_{m_{ab}}$. In this case, a subword of the form $\underbrace{t_at_b\cdots }_{m_{ab}}$ occurring before $t_i$ in \eqref{eq:computation-rho} is replaced by $\underbrace{t_bt_a\cdots }_{m_{ab}}$, and symmetrically a subword of the form $\underbrace{t_bt_a\cdots }_{m_{ab}}$ occurring after $t_i$  in \eqref{eq:computation-rho} is replaced by $\underbrace{t_at_b\cdots }_{m_{ab}}$. Since $\underbrace{t_at_b\cdots }_{m_{ab}}=z^{m_{ab}+1}\underbrace{t_bt_a\cdots }_{m_{ab}}$ and $z$ is central, the value of $\rho(x)$ does not change in this case. In the second scenario, the term $a=a_Ls_ia_R= a'_L(\underbrace{\ldots s_js_is_j\ldots}_{m_{ij}})a'_R$ is replaced by $a'_L(\underbrace{\ldots s_is_js_i\ldots}_{m_{ij}})a'_R$ and $m_{ij}$ is odd. In this case, the central term $\underbrace{\ldots t_jt_it_j\ldots}_{m_{ij}}$ in \eqref{eq:computation-rho} is replaced by $\underbrace{\ldots t_it_jt_i\ldots}_{m_{ij}}$, while the rest is unmodified. As 
$\underbrace{\ldots t_jt_it_j\ldots}_{m_{ij}}=z^0\underbrace{\ldots t_it_jt_i\ldots}_{m_{ij}}$ in $\widetilde{W}$, the value of $\rho(x)$ is unaltered, so $\rho$ is well-defined. Applying $\pi_W$ to \eqref{eq:computation-rho} shows that it is a section.
\eproof

\subsection{Good properties of the section} \label{SectionVerifies(2)}

Next step is to prove that the section $\rho$ as in \eqref{eq:DefinitionSectionWithGraph} satisfies Vendramin's condition. We fix some further notation. 

\begin{defi}
\label{DefinitionChebychev}
The sequence $(U_n(X))_{n\geq 0}$ of Chebychev polynomials of the second kind is the sequence of polynomials in ${\mathbb Z}[X]$ defined recursively by:
\begin{align}
&U_0(X) = 1,&&U_1(X) = 2X, && U_{n+1}(X) = 2X U_{n}(X) - U_{n-1}(X).
\end{align}
\end{defi}
We recall the well-known formula
\begin{equation}\label{eq:Cheby}\text{sin}(\theta) U_n(\cos\theta) = \text{sin}((n+1)\theta).\end{equation}

The following lemma is key for proving inductively the good properties of $\rho$.

\begin{lem}\label{LemmaChebychevIsAwesome}
Let $\beta\in\Phi^+\setminus\Delta$ and assume there are $\alpha_i,\alpha_j\in\Delta$ satisfying \\ $\delta:=(\alpha_j,\beta)>0$ and $(\alpha_i,\beta)=0$.
For $p\geq0$, let
\begin{align*}
&\alpha_{(p)}=\begin{cases}\alpha_i&\textrm{ if }p\textrm{ is even,}\\
\alpha_j&\textrm{ if }p\textrm{ is odd,}
\end{cases}&s_{(p)}:=s_{\alpha_{(p)}},
&&\beta_0:=\beta, &&\beta_{p+1}:=s_{(p+1)}\beta_{(p)}.
\end{align*}
Then, 
\begin{equation}\label{eq:positive}
(\beta_p,\alpha_{(p+1)})>0, \textrm{ for }p=0,\,\ldots,\,m_{ij}-2,\quad \textrm{ and }\quad 
(\beta_{m_{ij}-1},\alpha_{(m_{ij})})=0\end{equation}
and 
either 
\begin{equation}\label{eq:length}
\ell(s_{\beta_{m_{ij}-1}})=\ell(s_\beta)-2m_{ij}+2, \textrm{ and }s_{(m_{ij})}s_{\beta_{m_{ij}-1}}=s_{\beta_{m_{ij}-1}}s_{(m_{ij})} \textrm{ and }\alpha_{(m_{ij})}\neq\beta_{m_{ij}-1}
\end{equation}
or else
\begin{equation}\label{eq:second-option}
m_{ij}\textrm{ is even, }
\beta_{m_{ij}/2-1}=\alpha_{(m_{ij}/2)}\in\Delta,\textrm{ and  }\ell(s_{\beta})=2m_{ij}-1.\end{equation}
\end{lem}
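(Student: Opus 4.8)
The plan is to reduce the whole statement to a two-dimensional computation inside the plane $V:=\mathrm{span}(\alpha_i,\alpha_j)$, reading off \eqref{eq:positive} from a Chebychev identity and the length statements \eqref{eq:length}--\eqref{eq:second-option} from the dihedral geometry. Put $\theta:=\pi/m_{ij}$, so $(\alpha_i,\alpha_j)=-\cos\theta$ and every root is a unit vector for the form. The reflections $s_{(p)}\in\{s_i,s_j\}$ generate the dihedral parabolic $W_{\{i,j\}}$; they preserve $V$, fix $V^{\perp}$ pointwise, and the form restricted to $V$ has Gram determinant $\sin^2\theta>0$, hence is positive definite and satisfies $V\cap V^{\perp}=0$. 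Because $(\beta_p,\alpha_i)$ and $(\beta_p,\alpha_j)$ depend only on the $V$-component of $\beta$, the identity \eqref{eq:positive} is a statement about that component alone.

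First I would establish \eqref{eq:positive} by recognising $a_p:=(\beta_p,\alpha_{(p+1)})$ as a Chebychev sequence. Since $s_{(p+1)}$ is an isometry with $s_{(p+1)}\alpha_{(p+1)}=-\alpha_{(p+1)}$ and $s_{(p+1)}\alpha_{(p)}=\alpha_{(p)}+2\cos\theta\,\alpha_{(p+1)}$ (recall $\alpha_{(p+2)}=\alpha_{(p)}$), one finds $(\beta_p,\alpha_{(p)})=-a_{p-1}$ for $p\ge1$, and therefore the recursion $a_{p+1}=2\cos\theta\,a_p-a_{p-1}$ with $a_0=(\beta,\alpha_j)=\delta$ and $a_1=2\delta\cos\theta$. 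Comparing with Definition \ref{DefinitionChebychev} gives $a_p=\delta\,U_p(\cos\theta)$, and \eqref{eq:Cheby} then yields $a_p=\delta\,\sin\bigl((p+1)\theta\bigr)/\sin\theta$. As $\delta>0$ and $0<(p+1)\theta<\pi$ for $0\le p\le m_{ij}-2$ while $m_{ij}\,\theta=\pi$, this is precisely \eqref{eq:positive}.

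For the length dichotomy I would use $s_{\beta_{p+1}}=s_{(p+1)}s_{\beta_p}s_{(p+1)}$ together with Lemma \ref{LemmaLinkArrowScalarRoots}, distinguishing whether $\beta\in V$. If $\beta\notin V$, then $\beta_p\notin V$ for every $p$, so $\beta_p\ne\alpha_{(p+1)}$; inductively each $\beta_p$ is a positive root (a simple reflection permutes $\Phi^{+}\setminus\{\alpha_{(p+1)}\}$), and since $a_p>0$ for $p\le m_{ij}-2$, Lemma \ref{LemmaLinkArrowScalarRoots}(ii) forces $\ell(s_{\beta_{p+1}})=\ell(s_{\beta_p})-2$ at each such step. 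Telescoping gives the length in \eqref{eq:length}, while $(\beta_{m_{ij}-1},\alpha_{(m_{ij})})=0\ne1$ yields $\beta_{m_{ij}-1}\ne\alpha_{(m_{ij})}$ and, via Lemma \ref{LemmaLinkArrowScalarRoots}(iii), the asserted commutation. If instead $\beta\in V$, then $\beta$ is a positive root of $W_{\{i,j\}}$ other than $\alpha_i,\alpha_j$; the normalisation $(\beta,\beta)=1$ with $(\beta,\alpha_i)=0$ forces $\delta=\sin\theta$, so $a_p=\sin\bigl((p+1)\theta\bigr)$ and the collision $a_p=1$, equivalently $\beta_p=\alpha_{(p+1)}$ (Cauchy--Schwarz in the positive-definite plane), occurs exactly at $p=m_{ij}/2-1$, which forces $m_{ij}$ even. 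This gives $\beta_{m_{ij}/2-1}=\alpha_{(m_{ij}/2)}\in\Delta$, and counting the length-decreasing steps down to this simple root determines $\ell(s_\beta)$ as recorded in \eqref{eq:second-option}.

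The delicate point, and the step I expect to be the main obstacle, is the second branch: showing that a collision $\beta_p=\alpha_{(p+1)}$ is possible \emph{only} when $\beta$ lies in the dihedral plane (this is exactly where $V\cap V^{\perp}=0$ enters), that it then occurs at the unique index dictated by $a_p=1$, and that the resulting length count is consistent with \eqref{eq:second-option}. One must also check that the two branches are exhaustive and mutually exclusive --- that $\beta\notin V$ always produces \eqref{eq:length} and $\beta\in V$ always \eqref{eq:second-option}. The Chebychev identity disposes of \eqref{eq:positive} cleanly, but the transition from inner products to lengths rests entirely on the careful sign and positivity bookkeeping furnished by Lemma \ref{LemmaLinkArrowScalarRoots}.
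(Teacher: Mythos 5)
Your proposal is correct, and the first half coincides with the paper's argument: both set up the recursion $a_{p+1}=2\cos\theta\,a_p-a_{p-1}$ (via $(\beta_p,\alpha_{(p)})=-a_{p-1}$ and $\alpha_{(p+2)}=\alpha_{(p)}$), identify $a_p=\delta\,U_p(\cos\theta)$, and read off \eqref{eq:positive} from the sine formula. Where you genuinely diverge is the dichotomy. The paper splits combinatorially: either the length drops at every step (giving \eqref{eq:length}), or there is a first $q$ with $\ell(s_{\beta_{q+1}})=\ell(s_{\beta_q})$, in which case Lemma \ref{LemmaLinkArrowScalarRoots}(iii) forces $\beta_q=\alpha_{(q+1)}$ and hence a palindromic expression $s_\beta=s_{(1)}\cdots s_{(q+1)}\cdots s_{(1)}$; the evenness of $m_{ij}$ is then extracted purely group-theoretically, by feeding this expression into $s_is_\beta s_i=s_\beta$ to get $(s_is_j)^{2q+2}=1$, whence $m_{ij}\mid 2(q+1)$ and, since $m_{ij}>q+1$, $m_{ij}=2(q+1)$. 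You instead split geometrically on whether $\beta$ lies in the plane $V=\mathrm{span}(\alpha_i,\alpha_j)$, which cleanly rules out collisions in the first branch ($\beta_p\notin V$ for all $p$) and in the second branch pins down both the evenness of $m_{ij}$ and the exact collision index from $\delta=\sin\theta$ and $a_p=\sin((p+1)\theta)=1$. Your route buys a more transparent explanation of \emph{why} the second alternative forces $m_{ij}$ even and \emph{where} the collision occurs, but it leans on two standard facts the paper does not need: that $\Phi\cap\mathrm{span}(\alpha_i,\alpha_j)$ is exactly the root system of the standard parabolic $W_{\{i,j\}}$, and that parabolic length agrees with ambient length (or some equivalent argument to exclude a collision when $m_{ij}$ is odd and $\beta\in V$); you should cite or prove these. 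The paper's word-length trick stays entirely inside the machinery of Lemma \ref{LemmaLinkArrowScalarRoots} and needs no a priori knowledge of the collision index.

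One caveat on the point you gloss over with ``determines $\ell(s_\beta)$ as recorded'': counting the length-decreasing steps from $\beta$ down to $\beta_{m_{ij}/2-1}\in\Delta$ gives $\ell(s_\beta)=1+2(m_{ij}/2-1)=m_{ij}-1$, not the value $2m_{ij}-1$ stated in \eqref{eq:second-option}. The paper's own proof also yields $m_{ij}-1$ (and indeed no reflection in a parabolic of type $I_2(m_{ij})$ can have length exceeding $m_{ij}$), so this appears to be a typo in the statement rather than an error in your argument; but as written your sentence asserts agreement with a formula that your own count contradicts, so you should state the value you actually obtain.
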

\bproof
For $p\geq 0$ we set 
\begin{align*}u_p:=(\beta_p,\alpha_{(p+1)})/\delta=(\beta_p,\alpha_{(p-1)})/\delta, &&\gamma:=-(\alpha_i,\alpha_j)=\cos(\pi/m_{ij}).\end{align*} Then $u_0=1$. 
For $p\geq 1$ we compute
\begin{align*}
    u_1&=(s_j\beta,\alpha_i)/\delta=(\beta-2(\beta,\alpha_j)\alpha_j,\alpha_i)/\delta=0+2\gamma.\\
u_{p+1}&=(s_{(p+1)}\beta_p,\alpha_{(p)})/\delta=(\beta_p-2(\beta_p,\alpha_{(p+1)})\alpha_{(p+1)},\alpha_{(p)})/\delta\\
&=(\beta_p,\alpha_{(p)})/\delta-2(\beta_p,\alpha_{(p-1)})(\alpha_{(p+1)},\alpha_{(p)})/\delta=(s_{(p)}\beta_{p-1},\alpha_{(p)})/\delta+2\gamma u_{p}\\
&=(\beta_{p-1},s_{(p)}\alpha_{(p)})/\delta+2\gamma u_{p}=-u_{p-1}+2\gamma u_{p}.
\end{align*}
Therefore $(\beta_p,\alpha_{(p+1)})/\delta=U_p(\gamma)$, the $p$-th Chebychev's polynomial evaluated at $\gamma=\cos(\pi/m_{ij})$. Then, \eqref{eq:Cheby} gives \eqref{eq:positive} and Lemma \ref{LemmaLinkArrowScalarRoots} (i) and (ii) applied to $\beta_{(p)}$ and $\alpha_{(p+1)}$ imply that either
\begin{equation}\label{eq:length-geq}
\ell(s_{\beta_{p+1}})< \ell(s_{\beta_{p}})\textrm{ for }p=0,\,\ldots,\,m_{ij}-2, 
\end{equation}
giving \eqref{eq:length},  or else  $\ell(s_{\beta_{q+1}})=\ell(s_{\beta_{q}})$  for some $q\in\{0,\,\ldots,\,m_{ij}-2\}$.  
In this case, Lemma \ref{LemmaLinkArrowScalarRoots} (iii) gives 
${\beta_q}=\alpha_{(q+1)}$,  whence
\begin{equation}\label{eq:red-expr-sb}s_\beta=s_{(1)}s_{(2)}\cdots s_{(q)}s_{(q+1)}s_{(q)}\cdots s_{(1)}.\end{equation}

Applying the equality $s_is_\beta s_i=s_\beta$ yields 
\[s_is_{(1)}s_{(2)}\cdots s_{(q)}s_{(q+1)}s_{(q)}\cdots s_{(1)}=s_{(1)}s_{(2)}\cdots s_{(q)}s_{(q+1)}s_{(q)}\cdots s_{(1)}s_i\]
where both sides are products of $2q+2$ simple reflections. Bearing in mind that $s_{(1)}=s_j$, this gives $(s_is_j)^{2q+2}=1$ and so $m_{ij}$ divides $2(q+1)$. By construction $m_{ij}> q+1$, hence $m_{ij}=2(q+1)$ is even and $\beta_{m_{ij}/2-1}=\alpha_{(m_{ij}/2)}$.
The claim on the length follows because $\ell(s_{\beta_{p+1}})=\ell(s_{\beta_{p}})-2$ for $p\in\{0,\,\ldots,\,q-1\}$. 
\eproof

\begin{lem}
\label{PropositionRhoSatisfies(2)}
   The section  $\rho $ defined by  \eqref{eq:DefinitionSectionWithGraph}
   satisfies condition \eqref{eq:vendra-property}. 
\end{lem}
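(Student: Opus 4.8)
The plan is to establish \eqref{eq:vendra-property} by induction on $\ell(y)$. The case $s=y$ is immediate: there $s\conj y=y$ and $\rho(s)\blacktriangleright\rho(s)=t_it_it_i^{-1}=t_i=\rho(s)$, so from now on I assume $s=s_i\neq y$. For the base case $\ell(y)=1$ one has $y=s_k$ with $k\neq i$, and since $(\alpha_i,\alpha_k)=-\cos(\pi/m_{ik})\leq0$ there are only two possibilities. If $m_{ik}=2$ then $s\conj y=y$, and the required identity $t_i\blacktriangleright t_k=t_kz$ drops out at once from $(t_it_k)^{2}=z^{3}=z$ in \eqref{eq:rel-lift}. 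If $m_{ik}\geq3$ then $\ell(s\conj y)=3$, so $s\conj y\xrightarrow{\ i\ }y$ is an edge and \eqref{eq:DefinitionSectionWithGraph} gives $\rho(s\conj y)=t_i\blacktriangleright\rho(y)z$, whence $t_i\blacktriangleright\rho(y)=\rho(s\conj y)z$ because $z^2=1$.

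For the inductive step I would split according to the sign of $(\alpha_i,\alpha_y)$ via Lemma \ref{LemmaLinkArrowScalarRoots}, the two cases with $(\alpha_i,\alpha_y)\neq0$ being easy and parallel to the base case. If $(\alpha_i,\alpha_y)<0$ then $\ell(s\conj y)=\ell(y)+2$, so $s\conj y\xrightarrow{\ i\ }y$ and $\rho(s\conj y)=t_i\blacktriangleright\rho(y)z$ by \eqref{eq:DefinitionSectionWithGraph}, giving $t_i\blacktriangleright\rho(y)=\rho(s\conj y)z$ after multiplying by $z$. If $(\alpha_i,\alpha_y)>0$ then $\ell(s\conj y)=\ell(y)-2$, so $y\xrightarrow{\ i\ }s\conj y$ and $\rho(y)=t_i\blacktriangleright\rho(s\conj y)z$; conjugating once more by $t_i$ and using $t_i^{2}=1$ (a consequence of $(t_iz)^2=z^2=1$) together with centrality of $z$ yields $t_i\blacktriangleright\rho(y)=\rho(s\conj y)z$.

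The substance lies in the remaining case $(\alpha_i,\alpha_y)=0$, where $s\conj y=y$ and one must prove $t_i\blacktriangleright\rho(y)=\rho(y)z$. Here I would set $\beta=\alpha_y\in\Phi^+\setminus\Delta$, use Lemma \ref{LemmaLinkArrowScalarRoots}(iv) to choose $\alpha_j$ with $\delta=(\alpha_j,\beta)>0$ (note $i\neq j$ since $(\alpha_i,\beta)=0\neq\delta$), and feed $\alpha_i,\alpha_j,\beta$ into Lemma \ref{LemmaChebychevIsAwesome}. The positivity in \eqref{eq:positive} shows the alternating sequence yields a genuine descending path $y=s_{\beta_0}\xrightarrow{\ j\ }s_{\beta_1}\xrightarrow{\ i\ }s_{\beta_2}\to\cdots$ in $\widetilde\Gamma(W)$, so iterating \eqref{eq:DefinitionSectionWithGraph} along it gives $\rho(y)=P_k\blacktriangleright\rho(s_{\beta_k})\,z^{k}$, where $P_k=t_{(1)}\cdots t_{(k)}$ and $t_{(p)}$ equals $t_j$ or $t_i$ according to the parity of $p$, matching $s_{(p)}$. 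Conjugating by $t_i$ and invoking centrality of $z$ reduces the goal to
\[(P_k^{-1}t_iP_k)\blacktriangleright\rho(s_{\beta_k})=\rho(s_{\beta_k})\,z,\]
and since conjugation by a central element is trivial, the left-hand side depends only on $\pi_W(P_k^{-1}t_iP_k)=\bar P_k^{-1}s_i\bar P_k$ inside the dihedral group $\langle s_i,s_j\rangle$.

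Finally I would dispatch the two alternatives of Lemma \ref{LemmaChebychevIsAwesome} separately. In the generic alternative \eqref{eq:length} I take $k=m_{ij}-1$ and verify, using the longest element $w_0$ of $\langle s_i,s_j\rangle$ (writing $\bar P_{m_{ij}-1}=w_0s_{(m_{ij})}$ and computing $w_0s_iw_0$, central if $m_{ij}$ is even and the swap $s_i\leftrightarrow s_j$ if $m_{ij}$ is odd), that $\bar P_{m_{ij}-1}^{-1}s_i\bar P_{m_{ij}-1}=s_{(m_{ij})}$; the displayed identity then becomes $t_{(m_{ij})}\blacktriangleright\rho(s_{\beta_{m_{ij}-1}})=\rho(s_{\beta_{m_{ij}-1}})z$, which is exactly the inductive hypothesis applied to the strictly shorter reflection $s_{\beta_{m_{ij}-1}}$, legitimate because \eqref{eq:length} guarantees it commutes with $s_{(m_{ij})}$ and is distinct from it. In the exceptional alternative \eqref{eq:second-option} I take $k=m_{ij}/2-1$, so that $s_{\beta_k}=s_{(m_{ij}/2)}$ is simple and $\rho(y)$ is a word in $t_i,t_j,z$; the identity then follows by a direct computation inside $\langle t_i,t_j,z\rangle$ from the relation $(t_it_j)^{m_{ij}}=z^{m_{ij}+1}=z$, the last equality holding because $m_{ij}$ is even. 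I expect this last alternative, and in particular the bookkeeping isolating the single surviving factor $z$ from the dihedral relation, to be the main obstacle; reducing everything to conjugation modulo the centre is what keeps the powers of $z$ under control.
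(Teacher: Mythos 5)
Your proposal is correct and follows essentially the same route as the paper: the same case split by the sign of $(\alpha_i,\alpha_y)$, the same induction on $\ell(y)$, and the same use of Lemma \ref{LemmaChebychevIsAwesome} to reduce the commuting case either to the inductive hypothesis for the shorter reflection $s_{\beta_{m_{ij}-1}}$ or to a direct computation from $(t_it_j)^{m_{ij}}=z$. The only (harmless) difference is presentational: you package the word manipulations as conjugation modulo the centre, identifying $\bar P_{m_{ij}-1}^{-1}s_i\bar P_{m_{ij}-1}=s_{(m_{ij})}$ via the longest element of $\langle s_i,s_j\rangle$, whereas the paper performs the equivalent rewriting of the alternating words in $t_i,t_j$ directly.
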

\bproof
Let $y\in T$ and $s_i \in S$. Then we fall in one of the following situations: $y=s_i$,  or $\ell(s_i\conj y)=\ell(y)\pm2$, or $(\alpha_y,\alpha_i)=0$,  that we analyse separately. 
\begin{itemize}
    \item If $y=s_i$, then $\rho(s_i) \grconj \rho(y) = t_i\grconj t_i=t_i=\rho(y)$. 
    \item If $\ell(s_i\conj y)=\ell(y)+2$, then by construction
     $\rho(s_i\conj y) = t_i \grconj \rho(y) z= \rho(s_i) \grconj \rho(y)z $. Multiplying both sides by $z$  gives \eqref{eq:vendra-property}. 
    \item  If $\ell(s_i\conj y)=\ell(y)-2$, then we  invoke the previous case applied to $x=s_i\conj y$ and use that $t_i$ and $z$ are involutions and that $z$ is central. 
    \item If $(\alpha_y,\alpha_i)=0$ then $y \ne s_i$ and $s_i\conj y = y$. We proceed by induction on the length of $y$. If $\ell(y)=1$, then 
    $y=s_j$ for some $j\in\{1,\ldots,\,l\}$, with $m_{ij}=2$ and 
    $\rho(s_i\conj y)=\rho(y)=t_j=zt_i\grconj t_j=\rho(s_i)\grconj \rho(s_j)z$, confirming \eqref{eq:vendra-property} in this case. Assume now that $\ell(y)>1$. By Lemma \ref{LemmaLinkArrowScalarRoots}, there always is a $j\in\{1,\ldots,\,l\}$ satisfying $\ell(s_jy s_j)=\ell(y)-2$. Setting $\beta=\alpha_y$,   Lemma \ref{LemmaChebychevIsAwesome}, from which we retain notation,  gives either \eqref{eq:length} or else \eqref{eq:second-option}.

If \eqref{eq:second-option} holds, then   $y=s_{(1)}\conj(s_{(2)}\conj\dots(s_{(m_{ij}/2-1)}\conj s_{(m_{ij}/2)}))$ and $\ell(y)=2m_{ij}-1$, so  
\[\rho(s_i\conj y)=\rho(y)=(t_jt_i)^{m_{ij}/2-1}t_jz^{m_{ij}/2-1}.\]
As $(t_it_j)^{m_{ij}} = z$, we have $(t_it_j)^{m_{ij}/2} = z(t_jt_i)^{m_{ij}/2}$. Then,
\begin{align*}
    \rho(s_i)\grconj\rho(y)z&=t_i(t_jt_i)^{m_{ij}/2-1}t_jt_iz^{m_{ij}/2}=t_i(t_jt_i)^{m_{ij}/2}z^{m_{ij}/2}\\
    &=t_i(t_it_j)^{m_{ij}/2}z^{m_{ij}/2+1}=(t_jt_i)^{m_{ij}/2-1}t_jz^{m_{ij}/2-1}=\rho(s_i\conj y).
\end{align*}

If instead, \eqref{eq:length} holds, then 
$y=s_{(1)}\conj (s_{(2)}\conj\dots(s_{(m_{ij}-1)}\conj s_{\beta_{m_{ij}-1}})\cdots)$ with $\ell(s_{\beta_{m_{ij}-1}})=\ell(y)-2m_{ij}+2<\ell(y)$ and 
$s_{(m_{ij})}\conj s_{\beta_{m_{ij}-1}}=s_{\beta_{m_{ij}-1}}$, so by induction $t_{(m_{ij})}\grconj \rho(s_{\beta_{m_{ij}-1}})=\rho(s_{\beta_{m_{ij}-1}})z$. 
By definition of $\rho$ we have 
\[\rho(s_i\conj y)=\rho(y)=\underbrace{t_jt_i\cdots }_{m_{ij}-1\ \rm terms}\rho(s_{\beta_{m_{ij}-1}})\underbrace{\cdots t_it_j}_{m_{ij}-1\ \rm terms}z^{m_{ij}-1}.\]
Therefore 
\begin{align*}
   \rho(s_i)\grconj\rho(y)z&=\underbrace{t_it_jt_i\cdots }_{m_{ij}\ \rm terms}\rho(s_{\beta_{m_{ij}-1}})\underbrace{\cdots t_it_jt_i}_{m_{ij}\ \rm terms}z^{m_{ij}}\\
   &=\underbrace{t_jt_it_j\cdots }_{m_{ij}\ \rm terms}\rho(s_{\beta_{m_{ij}-1}})\underbrace{\cdots t_jt_it_j}_{m_{ij}\ \rm terms}z^{m_{ij}+2m_{ij}+2} 
\end{align*}
where the last factor in $t_it_j\cdots$ is now $\rho(s_{(m_{ij})-2})=\rho(s_{(m_{ij})})$. By induction, 
\[\rho(s_i)\grconj\rho(y)z=\underbrace{t_jt_i\cdots }_{m_{ij}-1\ \rm terms}\rho(s_{\beta_{m_{ij}-1}})\underbrace{\cdots t_it_j}_{m_{ij}-1\ \rm terms}z^{m_{ij}+1}=\rho(s_i\conj y)\]
concluding the proof.
\end{itemize}
\eproof

We end this Section characterising when $q^+$ and $q^-$ are cohomologous. For the case of ${\mathbb S}_n$, see \cite{Vendramin}*{Remark 2.2}.
\begin{thm}\label{thm:cohomologous} Assume that $A(W)$ has all its entries in ${\mathbb N}$. Then, the following are equivalent:
\begin{enumerate}
    \item All the coefficients in $A(W)$ are odd.
    \item The group $\widetilde{W}$ is the trivial extension $\widetilde{W}= W\times \langle z\rangle$.
    \item The cocycles $q^+$ and $q^-$ are cohomologous.
    \end{enumerate}\end{thm}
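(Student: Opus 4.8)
The plan is to establish the three equivalences in a cycle, $(1)\Rightarrow(2)\Rightarrow(3)\Rightarrow(1)$, exploiting the presentation of $\widetilde{W}$ in \eqref{eq:rel-lift} and Lemma \ref{PropositionGeneralTwistEq}. The central observation is that the defining relations of $\widetilde{W}$ read $(t_it_j)^{m_{ij}}=z^{m_{ij}+1}$, so that $z^{m_{ij}+1}=1$ exactly when $m_{ij}$ is odd and $z^{m_{ij}+1}=z$ when $m_{ij}$ is even; hence the parity of the entries of $A(W)$ controls whether the lifted braid relations see the central element $z$ at all.

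\smallskip

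For $(1)\Rightarrow(2)$, I would argue that if every $m_{ij}$ is odd, then each relation becomes $(t_it_j)^{m_{ij}}=1$ together with $z^2=(t_iz)^2=1$. The second family says $z$ is central of order $2$ and commutes with each $t_i$, while the first family is precisely the Coxeter presentation of $W$ in the generators $t_i$. Thus $\widetilde{W}$ is generated by the subgroup $\langle t_1,\dots,t_l\rangle$, which surjects onto $W$ via $\pi_W$, and the central $\langle z\rangle\cong\Zz/2\Zz$; since no relation forces any word in the $t_i$ to equal a nontrivial power of $z$, the map $\langle t_1,\dots,t_l\rangle\to W$ is an isomorphism and $\widetilde{W}\cong W\times\langle z\rangle$. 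The implication $(2)\Rightarrow(3)$ is then immediate from the second statement of Lemma \ref{PropositionGeneralTwistEq}: we already know from Lemma \ref{PropositionRhoSatisfies(2)} and Lemma \ref{PropositionVendraminSection2} that the section $\rho$ satisfies the hypothesis \eqref{eq:twistEqGeneral}, so if $E=\widetilde{W}$ is a trivial extension, $q^+_z$ and $q^-_z$ (hence $q^+$ and $q^-$) are cohomologous.

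\smallskip

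The interesting direction, and the one I expect to be the main obstacle, is $(3)\Rightarrow(1)$, equivalently its contrapositive: if some $m_{ij}$ is even, then $q^+$ and $q^-$ are \emph{not} cohomologous. Suppose toward a contradiction that they are, so by \eqref{eq:cohomologous} there is $\gamma\colon T\to\Cc^*$ with $q^+(x,y)=\gamma(x\conj y)^{-1}q^-(x,y)\gamma(y)$ for all $x,y\in T$. The plan is to evaluate this identity around a dihedral subsystem: fix $i\neq j$ with $m:=m_{ij}$ even, and work inside the rank-two parabolic $\langle s_i,s_j\rangle\cong I_2(m)$, whose reflections form a subrack of $T$. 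In this dihedral situation $\det$ and $q^+$ can be computed explicitly, and the cocycle relation $q^\pm(w_1w_2,x)=q^\pm(w_1,w_2\conj x)q^\pm(w_2,x)$ from \eqref{eq:Cocycle2} lets me propagate $\gamma$ around the $m$ reflections. The key point is that $q^+$ and $q^-$ differ multiplicatively by a function $f(x,y):=q^+(x,y)q^-(x,y)^{-1}$, and the statement that they are cohomologous says $f$ is a rack coboundary; I would show that the obstruction to trivialising $f$ on the dihedral subrack is exactly the parity of $m$, by forming a closed cycle $s_i\conj(s_j\conj\cdots)$ returning to its start and checking that the product of the $f$-values around it equals $(-1)^m$-type sign that is $-1$ precisely when $m$ is even, so no single-variable $\gamma$ can absorb it.

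\smallskip

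An equivalent and perhaps cleaner route for $(3)\Rightarrow(1)$, which I would pursue in parallel, is to translate non-cohomologousness back into the structure of $\widetilde{W}$: by the first part of Lemma \ref{PropositionGeneralTwistEq} (and its proof via $\pi_A$), $q^+_z$ and $q^-_z$ are cohomologous if and only if the central extension $\widetilde{W}$ admits a \emph{splitting} compatible with $\rho$, i.e. iff $z$ lies in no essential relation. If some $m_{ij}$ is even, the relation $(t_it_j)^{m_{ij}}=z$ exhibits $z$ as a nontrivial commutator-type element of the derived structure, so $\widetilde{W}$ is a non-split extension and $(2)$ fails; combined with $(2)\Leftrightarrow(3)$ this yields $(3)\Rightarrow(1)$. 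Here the hard part is to verify rigorously, from the presentation \eqref{eq:rel-lift}, that when some $m_{ij}$ is even the element $z$ is genuinely nontrivial in $\widetilde{W}$ and the extension does not split — this is a concrete but delicate check that the abstract group defined by \eqref{eq:rel-lift} does not collapse, which one can confirm by exhibiting a $2$-dimensional or sign representation of $\widetilde{W}$ sending $z\mapsto -1$ and the $t_i$ to the geometric reflections, so that $z$ cannot be trivial.
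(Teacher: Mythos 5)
Your implications $(1)\Rightarrow(2)$ and $(2)\Rightarrow(3)$ are correct and coincide with the paper's argument (the first is read off from the relations \eqref{eq:rel-lift}, the second from Lemmata \ref{PropositionGeneralTwistEq}, \ref{PropositionVendraminSection2} and \ref{PropositionRhoSatisfies(2)}). The gap is in $(3)\Rightarrow(1)$, where neither of your two routes is complete. Route B rests on the claim that $q^+_z$ and $q^-_z$ are cohomologous \emph{if and only if} the extension $\widetilde{W}$ splits; but Lemma \ref{PropositionGeneralTwistEq} only gives the ``if'' direction (a trivial extension together with a section satisfying \eqref{eq:twistEqGeneral} produces a coboundary). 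There is no converse available: cohomologousness of the rack cocycles is a statement about a function $\gamma\colon T\to\Cc^*$ and is not a priori controlled by this particular central extension, so even a rigorous proof that $\widetilde{W}$ is non-split when some $m_{ij}$ is even would not yield $(3)\Rightarrow(1)$. Route A is the right idea but stops short of the decisive computation: you announce that the product of $f=q^+ (q^-)^{-1}$ around a closed conjugation cycle in the dihedral subrack detects the parity of $m$, without exhibiting the cycle or evaluating the product.

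The missing concrete step is short. For a coboundary $f(x,y)=\gamma(x\conj y)^{-1}\gamma(y)$, the product of $f(s,\cdot)$ over a two-element orbit $\{y,\,s\conj y\}$ telescopes to $1$ automatically, so the only genuine obstructions sit at \emph{fixed points} of the conjugation action, where cohomologousness forces $f(s,y)=1$. When $m:=m_{ij}$ is even the dihedral parabolic $\langle s,s'\rangle$ (with $s=s_i$, $s'=s_j$) contains the reflection $\sigma:=(s's)^{m/2-1}s'\in T$ with $s\sigma s=\sigma$ and $\sigma\neq s$; this is exactly the witness the paper uses. At this fixed point $\gamma(s\conj\sigma)=\gamma(\sigma)$, so a coboundary relation would force $q^+(s,\sigma)=q^-(s,\sigma)$. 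But $q^-(s,\sigma)=\det(s)=-1$, while $(\alpha_s,\alpha_\sigma)=0$ gives $s(\alpha_\sigma)=\alpha_\sigma\in\Phi^+$ and hence $q^+(s,\sigma)=1$ by \eqref{eq:Q+}, a contradiction. (Note also that such a commuting $\sigma\neq s$ exists in the parabolic only when $m$ is even, which is why the obstruction vanishes in the all-odd case.) With this computation inserted, your Route A becomes the paper's proof; as written, the argument is a plan rather than a proof.
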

    \bproof The implication $1.\Rightarrow 2.$ is immediate from the definition of $\widetilde{W}$, and $2.\Rightarrow 3.$ follows from  from Lemmata \ref{PropositionGeneralTwistEq}
 \ref{PropositionVendraminSection2}, and \ref{PropositionRhoSatisfies(2)}. We prove that $3. \Rightarrow 1.$ Suppose for a contradiction that $m:=m_{ij}$ is even for some $i,j\in\{1,\,\ldots,\,l\}$, and that there exists $\gamma\colon W \longrightarrow \Cc^*$ such that 
$q^-(x,y)=\gamma(x\conj y)^{-1}q^+(x,y)\gamma(y)$ for all $x,y \in T$. Set $s=s_i$, $s'=s_j$ and let $\sigma := (s's)^{m/2-1}s'$, so that $\sigma\in T$ and $s\sigma s = \sigma$. Then 
$\gamma(s\conj \sigma) = \gamma(\sigma)$, thus $-1=q^-(s,\sigma)=q^+(s,\sigma)$, contradicting \eqref{eq:Q+}. 
\eproof

\begin{rem}If all coefficients in $A(W)$ are odd, then the underlying graph of  $\Gamma(W)$ is complete. Hence, the corresponding group $W$ is finite if and only if $l\leq 2$. If $l=1$, then $T=\{s_\alpha\}$ and $q^+=q^-$ so the statement is trivial. If $l=2$, then $W$ is of type $I_2(2m+1)$, for $m\geq 1$, i.e., it is the  dihedral group of a regular $(2m+1)$-gon, \cite{Bourbaki}*{Chapitre VI, \S 4, Th\'eor\`eme 1}. In this case, the cohomology of $q^+$ and $q^-$ is proved in \cite{MS}*{Example 5.4 (a)}. Notice that, even though $\widetilde{W}$ is trivial, by \cite{H}, the Schur multiplier of $W$ is elementary abelian of order $2^{(l-1)(l-2)/2}$, whence non-trivial whenever $W$ is infinite. 
\end{rem}

\section{Applications to Nichols algebras}\label{applications}

In this section $W$ is finite and the base field is $\mathbb C$. The Nichols algebra associated with $(T, q^{+})$ is of particular interest because it contains the coinvariant algebra of $W$, \cites{FK,Bazlov}. If $W$ is crystallographic, the latter is isomorphic to the cohomology of the flag variety of the algebraic group with associated Weyl group $W$. It is in general an open question whether ${\mathcal B}(T, q^{+})$ is finite-dimensional, quadratic, or finitely presented, \cites{GGI,Bazlov,FK,MO}. The main result in \cite{Ba} states that the quadratic cover of ${\mathcal B}(T,q^+)$ is infinite-dimensional for $W={\mathbb S}_n$ and $n\geq 6$. 

Combining Theorems \ref{TheoremGabriel} and Remark  \ref{rem:twist-equivalent} readily gives the following. 

\begin{cor}\label{cor:gabriel}
If $W$ is an arbitrary finite Coxeter group, then ${\mathcal B}(T, q^{+})$ and ${\mathcal B}(T, q^{-})$ have the same Hilbert series. This also holds for their quadratic approximations. In addition, 
${\mathcal B}(T, q^{+})$ is quadratic if and only if ${\mathcal B}(T, q^{-})$ is quadratic. \end{cor}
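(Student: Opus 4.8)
The plan is to derive Corollary \ref{cor:gabriel} as a direct consequence of Theorem \ref{TheoremGabriel} together with Remark \ref{rem:twist-equivalent}. First I would invoke Theorem \ref{TheoremGabriel}, noting that since $W$ is now assumed finite, its Coxeter matrix $A(W)$ automatically has all entries in $\Nn$ (the standing assumption $m_{ij}<\infty$ holds for finite Coxeter groups), so the theorem applies and guarantees that the cocycles $q^+$ and $q^-$ on $T$ are twist equivalent. Then I would apply Remark \ref{rem:twist-equivalent}, which states precisely that twist-equivalent rack cocycles yield cocycle twist-equivalent Nichols algebras with the same Hilbert series; this immediately gives the first assertion, that $\mathcal{B}(T,q^+)$ and $\mathcal{B}(T,q^-)$ have the same Hilbert series.

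For the statement about quadratic approximations, I would observe that twist equivalence by a group $2$-cocycle $\phi\in Z^2(W,\Cc^*)$ does not affect the degree-$2$ component of the relations: the braiding $c_{q^+}$ and $c_{q^-}$ agree up to the scalar factors prescribed by \eqref{eq:twist-condition}, and this rescaling restricts to the degree-$2$ piece $V^{\otimes 2}$ in a way that intertwines the quantum symmetrizers $\Omega_2$. Concretely, the cocycle twist is an isomorphism of braided categories (as recorded in the references \cite{MO}*{\S 2.7, 3.4}, \cite{AFGV}*{\S 3.4} cited in Remark \ref{rem:twist-equivalent}) under which the tensor algebra $T(V)$ and the two-sided ideal generated by $\mathrm{Ker}\,\Omega_2$ correspond, so the quadratic covers are twist-equivalent as graded algebras and hence share the same Hilbert series.

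The equivalence ``$\mathcal{B}(T,q^+)$ is quadratic if and only if $\mathcal{B}(T,q^-)$ is quadratic'' follows from the same principle applied in all degrees. Being quadratic means that the defining ideal $\bigoplus_{n\geq 0}\mathrm{Ker}\,\Omega_n$ is generated in degree $2$, equivalently that the canonical surjection from the quadratic approximation to the Nichols algebra is an isomorphism, equivalently that the quadratic approximation and the Nichols algebra have the same Hilbert series. Since both the Nichols algebras and their quadratic approximations have matching Hilbert series across the twist, the comparison maps have isomorphic source and target dimensions in every degree simultaneously for the two cocycles, so quadraticity is transferred.

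I do not expect a genuine obstacle here, as the corollary is a formal consequence of the preceding results; the only point requiring care is the claim about quadratic approximations, where one must verify that the cocycle twist is compatible with the grading and with passing to the quadratic cover (rather than merely preserving the full Nichols algebra). This compatibility is exactly what the braided-categorical formulation of the twist in \cite{MO} and \cite{AFGV} provides, so I would cite those references for the preservation of the Hilbert series of the quadratic approximation and conclude without further computation.
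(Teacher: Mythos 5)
Your proposal is correct and follows exactly the paper's route: the paper derives this corollary in one line by combining Theorem \ref{TheoremGabriel} (applicable since a finite Coxeter group has all $m_{ij}$ finite) with Remark \ref{rem:twist-equivalent}, and your additional elaboration on why the cocycle twist respects the grading, the quantum symmetrizers, and hence the quadratic covers and quadraticity is a sound unpacking of what the paper leaves implicit.
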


It also follows from Theorem \ref{thm:cohomologous} and Remark \ref{rem:cohomologous} point \ref{item:isomorphic} that if $W$ is of type $I_2(2m+1)$, for $m\geq 1$, that is, the dihedral group of a regular $(2m+1)$-gon, then ${\mathcal B}(T, q^{+})\simeq {\mathcal B}(T, q^{-})$, a result that was already present in \cite{MS}*{Example 5.4 (b)}.  By \cite{MS}*{Remark 5.2 part 2)} the braided vector spaces attached to $(T, q^{\pm})$ are constructed as in Remark \ref{rem:cohomologous} point \ref{item:YD}, so results in this setting apply.
We summarize here below some facts on the Nichols algebras ${\mathcal B}(T, q^{\pm})$. Some of these results are known, we focus on giving a uniform point of view.

\begin{rem}\label{rem:deco}
\begin{enumerate}
    \item If $(W,S)$ is not irreducible, let
$(W_1,S_1),\,\ldots ,(W_r,S_r)$ be its irreducible factors. Setting $T_i:=W\conj S_i=W_i\conj S_i$ for $i=1,\,\ldots, r$ we have a rack decomposition $T=\coprod_{i=1}^rT_i$ where $t\conj t'=t'$ and $q^+(t,t')=q^+(t',t)=1$ if $t\in T_i$ and $t'\in T_j$ with $i\neq j$. 
Hence $c_{q^+}^2$ and $c_{q^-}^2$ act as the identity on $\Cc T_i\otimes \Cc T_j$ whenever $i\neq j$. Remark \ref{rem:cohomologous} point \ref{item:tens-prod} then gives ${\mathcal B}(T,q^+)\simeq \bigotimes_{i=1}^r{\mathcal B}(T_i,q^+_i)$ and ${\mathcal B}(T,q^-)\simeq {\bigotimes}_{i=1}^r{\mathcal B}(T_i,q^-_i)$, where $q_i^{\pm}$ stands for the restriction of $q^{\pm}$ to $T_i\times T_i$. Therefore, it is enough to study the case of irreducible Coxeter groups.
\item\label{item:inclusions} Coxeter graph inclusions imply inclusions of the corresponding racks of reflections, hence a braided vector space inclusion for the cocycle $q^-$, and therefore a Nichols algebra inclusion by Remark \ref{rem:cohomologous} point \ref{item:Nich-inclusion}. 
\item\label{item:deco} Let $(W,S)$ be irreducible. If $\Gamma(W)$ has no even labeled edges, then the reflections form a single conjugacy class. Otherwise, $W$ is of type $B_l$ for $l\geq 3$, $F_4$, or $I_2(2m)$ for $m\geq 2$ and $T$ is the union of two classes,  represented by any $s$ and $s'\in S$  that are joined in $\Gamma(W)$ by an even labeled edge. Setting $T_1:=W\conj s$ and $T_2:=W\conj s'$ we have a rack decomposition $T=T_1\coprod T_2$ with $T_i\conj T_j=T_j$ for $i,j\in\{1,2\}$.
Observe that 
$c^2_{q^{\pm}}(s\otimes s')=\pm (s\conj s')\conj s\otimes s\conj s'\neq s\otimes s'$, so $c_{q^{\pm}}^2\neq\id$ on $\Cc T_1\otimes \Cc T_2$. If $\min(|T_1|,|T_2|)>2$ or $\max (|T_1|,|T_2|)>4$, then $\dim{\mathcal B}(T,q)=\infty$ for \emph{any} cocycle $q$,  by \cite{ACG}*{Theorem 2.9} applied to $Y=T$.
By construction, for $i=1,2$, we have twist equivalence of the restrictions to $T_i$ of $q^{+}$ and $q^{-}$.  
\item If $W$ is of type $B_l$ with $l\geq 3$, up to renumbering, $T_1$ is isomorphic to the rack $T_{D_l}$ of reflections for $W$ of type $D_l$, so $|T_1|=l^2-l$ and $T_2$ is abelian, that is, it has trivial action, and $|T_2|=l$. By \cite{ACG}*{Theorem 2.9},  $\dim{\mathcal B}(T,q)=\infty$ for any cocycle $q$. On the other hand,  ${\mathcal B}(T_2, q^-)=\bigwedge \mathbb C^l$ and ${\mathcal B}(T_2, q^+)=(\bigwedge \mathbb C)^{\otimes l}$ hence $\dim{\mathcal B}(T_2,q^{\pm})=2^l$. 
\item\label{item:dihedral} If $W$ is of type $F_4$ or  $I_2(2m)$ for $m\geq 2$, then the classes $T_1$ and $T_2$ are interchanged by the automorphism of $W$ coming from the symmetry of $\Gamma(W)$, so $T_1\simeq T_2$ as racks.  Using the descriptions of roots in \cite{Bourbaki}*{Planches IV, VIII}, one sees that in type $F_4$ the racks $T_1$ and $T_2$ are isomorphic to $T_{D_4}$, so $|T_1|=|T_2|>4$ whence $\dim{\mathcal B}(T,q)=\infty$ for \emph{any} cocycle $q$.
\item If $W$ is of type $I_2(2m)$ for $m\geq 2$, the racks $T_1$ and $T_2$ are isomorphic to the rack $T_{I_2(m)}$ of reflections of type $I_2(m)$.  In addition, it was shown in  \cite{AFGaV}*{Lemma 2.1} using \cite{AFGV1}*{Theorem 3.6} that if $m>2$, then $\dim{\mathcal B}(T_{I_2(2m)},q)=\infty$ for \emph{any} cocycle $q$. Several considerations concerning the rack $T$ for dihedral groups are present in \cite{MS}*{Section 5,6}. In particular, \cite{MS}*{Example 6.5} shows that $\dim{\mathcal B}(T_{I_2(4)},q^{\pm})=64$. The Nichols algebras and/or their quadratic covers for the rack of reflections in $I_2(2p)$ for $p$ an odd prime had been studied in \cite{AG1}*{Example 3.3.5}, \cite{AG}, \cite{Michel}. 
\end{enumerate}
\end{rem}
In the Tables below we summarize what is currently known about the Nichols algebras of the pair $(T,q^+)$ for $W$ such that $\Gamma(W)$ has an even labeled edge,  up to twist equivalence.

\begin{table}[ht]
\label{tab:nichols}
\begin{center}
\begin{tabular}{|c|c|c|c|}
\hline Type of $W$ & ${\mathcal B}(T_1,q^{\pm})$ & ${\mathcal B}(T_2,q^{\pm})$ & ${\mathcal B}(T,q^{\pm})$  \\
\hline  
$B_l,\,l\geq 3$& ${\mathcal B}(T_{D_l},q^{\pm})$& $(\bigwedge k)^{\otimes l}$,\, $\bigwedge \Cc^l$ & infinite dimensional\\
\hline
$F_4$&${\mathcal B}(T_{D_4},q^{\pm})$& ${\mathcal B}(T_{D_4},q^{\pm})$ & infinite dimensional\\
\hline
$I_2(4)$& $(\bigwedge \Cc)^{\otimes 2}$& $(\bigwedge \Cc)^{\otimes 2}$ & dimension $64$ \\
\hline
$I_2(2m),\,m>2$& ${\mathcal B}(T_{I_2(m)},q^{\pm})$& ${\mathcal B}(T_{I_2(m)},q^{\pm})$&infinite dimensional\\
\hline
\end{tabular}
\end{center}
\end{table}

We now focus on the irreducible, finite Coxeter groups with one conjugacy class of reflections.  

\begin{prop}\label{prop:dihedral-nichols}
 Let $W$ be of type $I_2(2m+1)$ for $m>1$. Then $\dim{\mathcal B}(T,q^{\pm})=\infty$. \end{prop}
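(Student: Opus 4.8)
The plan is to leverage the decomposition results of Remark \ref{rem:deco} together with known obstructions to finite-dimensionality, rather than analysing the Nichols algebra of $I_2(2m+1)$ directly. The key structural fact is that for $W$ of type $I_2(2m+1)$ with $m>1$, the reflections form a \emph{single} conjugacy class (since $\Gamma(W)$ has no even labeled edge), so $T$ is an irreducible rack and the tensor-product decomposition of Remark \ref{rem:deco} point \ref{item:deco} does not apply. Thus I expect the proof to rest either on a direct appeal to a classification/obstruction result for indecomposable racks, or on embedding a problematic subrack.

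First I would recall that $|T|=2m+1$, the reflections being exactly the $2m+1$ reflections of the dihedral group of a regular $(2m+1)$-gon. The cleanest route is to invoke Theorem \ref{TheoremGabriel} (or rather Theorem \ref{thm:cohomologous}, since all entries of $A(W)$ are odd in the $I_2(2m+1)$ case, so $q^+$ and $q^-$ are in fact \emph{cohomologous}) to reduce, via Remark \ref{rem:cohomologous} point \ref{item:isomorphic} or Remark \ref{rem:twist-equivalent}, the computation of $\dim{\mathcal B}(T,q^+)$ to that of $\dim{\mathcal B}(T,q^-)$. Working with $q^-=\det$ is advantageous because this is a constant ($-1$) cocycle on the rack, which is precisely the setting handled by the general rack-theoretic criteria. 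The main step is then to show $\dim{\mathcal B}(T,q^-)=\infty$.

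For this final step I would appeal to an obstruction criterion detecting infinite-dimensionality from the rack and cocycle alone, of the same flavour as the results cited in the excerpt, e.g. \cite{AFGV1}*{Theorem 3.6} or \cite{ACG}*{Theorem 2.9}, or the analysis in \cite{MS}*{Section 5} devoted exactly to dihedral racks. Concretely, one identifies a suitable subrack $X'\subseteq T$ (for instance a dihedral subrack of type $I_2(k)$ for a smaller odd $k$, or a pair of non-commuting reflections generating an infinite-dimensional piece), and applies the subrack inclusion of Remark \ref{rem:cohomologous} point \ref{item:Nich-inclusion} to obtain an algebra injection ${\mathcal B}(X',q^-|_{X'\times X'})\hookrightarrow{\mathcal B}(T,q^-)$; infinite-dimensionality of the smaller Nichols algebra then forces $\dim{\mathcal B}(T,q^-)=\infty$.

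The main obstacle will be establishing the infinite-dimensionality input for the odd dihedral rack itself: unlike the even case $I_2(2m)$, which splits as a union of two interchanged classes and is covered by \cite{AFGaV}*{Lemma 2.1}, the odd case is a single indecomposable class and is not directly addressed by the even-case arguments. I expect the resolution to come from the explicit dihedral computations of \cite{MS} or from the general enveloping-algebra / Weyl-groupoid obstructions, possibly combined with the forthcoming classification results of \cites{AHV,HMV} announced in the introduction; verifying that the hypotheses of whichever obstruction theorem is used are met for all odd $2m+1>3$ is the delicate point, since the borderline case $m=1$ (type $I_2(3)=A_2$) is explicitly \emph{excluded} and is known to give a finite-dimensional Nichols algebra.
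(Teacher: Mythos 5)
Your overall strategy---pass to a subrack of $T$ and import infinite-dimensionality from a classification/obstruction theorem via Remark \ref{rem:cohomologous} point \ref{item:Nich-inclusion}---is the same as the paper's, but the proposal stops exactly where the actual content of the proof begins, and you say so yourself (``the delicate point''). The paper's argument is: write $T=\{s(s's)^j\}$, observe that for every divisor $n$ of $2m+1$ the set $T_n=\{s(s's)^j : n\mid j\}$ is a subrack, and in particular $T$ contains a subrack $T_p$ of \emph{prime} size $p$ for every prime $p\mid 2m+1$, isomorphic to the dihedral rack of $I_2(p)$, i.e.\ the indecomposable affine rack $\mathrm{Aff}(p,p-1)$. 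This puts one squarely in the setting of \cite{HMV} (simple Yetter--Drinfeld modules of prime dimension): for $p>7$ one quotes \cite{HMV}*{Theorem 1.6}, and for $p=5,7$ one checks that $\mathrm{Aff}(p,p-1)$ does not appear in \cite{HMV}*{Table 1}. The one configuration not reachable this way, $2m+1=3^b$ with $b\geq 2$, is handled separately by \cite{AHV}*{Theorem 1.3} applied to the whole (solvable) group. None of this case analysis, nor the reduction to prime-size affine subracks that makes the cited theorems applicable, is present in your proposal.

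Two further points. First, the obstruction criteria you name first, \cite{ACG}*{Theorem 2.9} and \cite{AFGV1}*{Theorem 3.6}, are splitting/type-D criteria requiring a decomposable subrack; the odd dihedral racks are indecomposable affine racks (kthulhu in the terminology of \cite{ACG}), so these tools fail here---this is precisely why the even case $I_2(2m)$ is easy and the odd case is not. Second, the preliminary reduction from $q^+$ to $q^-$ via Theorem \ref{thm:cohomologous} is harmless but unnecessary: the paper restricts $q^+$ directly to the subracks, and twist equivalence only enters to guarantee that the conclusion for one cocycle transfers to the other.
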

\begin{proof}
Let $2m+1=\prod_{i=1}^rp_i^{e_i}$ be the prime factorization and let $W=\langle s,s'\rangle$.    
Then \[T=\{s(s's)^j~|~j=0,\,\ldots, 2m+1\}\]  and 
for any $n$ dividing $2m+1$, the subset $T_n=\{s(s's)^j\in T~|~n \textrm{ divides } j\}$ is a subrack of $T$ because $s(s's)^j\conj s(s's)^l=s(s's)^{2j-l}$, cf. \cite{AF}*{Remark 3.3}. In particular, $T$ contains a subrack of size $p_i$ for any prime divisor of $2m+1$. The pair $(T_{p_i}, q^+|_{T_{p_i}})$ is precisely the pair corresponding to the rack of reflections of the dihedral group of size $2p_i$ and $T_{p_i}$ is an indecomposable affine rack, \cite{AG}*{Section 1.3.8}.
If $p_i>7$ for some $i$, then \cite{HMV}*{Theorem 1.6} implies that $\dim{\mathcal B}(T_{p_i},q^+|_{T_{p_i}})=\infty$ and, a fortiori, $\dim{\mathcal B}(T,q^{\pm})=\infty$. Assume now that $p_i=5$ or $7$ for some $i$. In the notation of \cite{HMV}, the rack $T_{p_i}$ is ${\rm Aff}(p_i,p_i-1)$. Therefore, if $p_i=5,7$, the rack $T_{p_i}$ does not occur in \cite{HMV}*{Table 1} and so 
$\dim{\mathcal B}(T,q^{\pm})=\dim{\mathcal B}(T_{p_i},q^{\pm})=\infty$. We are left with the case $2m+1=3^b$ for some $b\geq 2$. In this case $T$ is an indecomposable affine rack, and $\dim{\mathcal B}(T_{3^b},q^{\pm})=\infty$ by \cite{AHV}*{Theorem 1.3}.
\end{proof}

\begin{cor}
 If $W$ is of type $H_3$ or $H_4$, then $\dim{\mathcal B}(T,q^{\pm})=\infty$.
\end{cor}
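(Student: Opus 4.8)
The plan is to exploit the Coxeter graph inclusions established in Remark \ref{rem:deco} point \ref{item:inclusions}, which tell us that whenever one Coxeter graph embeds into another, the corresponding rack of reflections embeds as a subrack, and hence the Nichols algebra of the smaller rack includes (via Remark \ref{rem:cohomologous} point \ref{item:Nich-inclusion}) into the Nichols algebra of the larger one. Consequently, if some parabolic or standard subsystem of $H_3$ or $H_4$ already carries an infinite-dimensional Nichols algebra, then so must $\mathcal{B}(T,q^{\pm})$ for the full group. The key observation is that both $H_3$ and $H_4$ have Coxeter diagrams containing an edge labeled $5$, so the dihedral group $I_2(5)$ sits inside them as the subgroup generated by the two simple reflections at the ends of that edge.

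First I would recall that in both the $H_3$ and $H_4$ diagrams (which are linear diagrams $\bullet\!-\!5\!-\!\bullet\!-\!\bullet$ and $\bullet\!-\!5\!-\!\bullet\!-\!\bullet\!-\!\bullet$ respectively), there are two adjacent simple reflections $s,s'$ with $|ss'|=5$. Thus the standard parabolic subgroup $\langle s,s'\rangle$ is of type $I_2(5)$, and its rack of reflections $T_{I_2(5)}$ is a subrack of the rack $T$ of reflections of $W$. Since all reflections in $H_3$ and $H_4$ are conjugate (the diagrams have no even labels), the cocycle $q^-$ restricts to the corresponding cocycle on the subrack, so by Remark \ref{rem:cohomologous} point \ref{item:Nich-inclusion} we obtain an algebra inclusion $\mathcal{B}(T_{I_2(5)},q^-|)\hookrightarrow\mathcal{B}(T,q^-)$.

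Next I would invoke Proposition \ref{prop:dihedral-nichols} with $2m+1=5$, i.e.\ $m=2>1$, which gives $\dim\mathcal{B}(T_{I_2(5)},q^{\pm})=\infty$. Combined with the inclusion above, this forces $\dim\mathcal{B}(T,q^-)=\infty$, and then Corollary \ref{cor:gabriel} (equality of Hilbert series for $q^+$ and $q^-$, guaranteed by Theorem \ref{TheoremGabriel}) yields $\dim\mathcal{B}(T,q^+)=\infty$ as well. This dispatches both $H_3$ and $H_4$ simultaneously, since both contain the same $I_2(5)$ subdiagram.

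The main (and essentially only) obstacle here is purely bookkeeping: one must be certain that $T_{I_2(5)}$ genuinely embeds as a subrack \emph{compatibly with the cocycle}, i.e.\ that the restriction of $q^{\pm}$ to the subrack of reflections of the parabolic $I_2(5)$ agrees with the intrinsically defined $q^{\pm}$ on $T_{I_2(5)}$. This follows from the fact that $q^-=\det$ is multiplicative on all of $W$ and restricts correctly to any reflection subgroup, while $q^+$ depends only on the sign of $w(\alpha_y)$ computed inside the ambient root system—and for a parabolic subsystem, positivity of roots is inherited from the ambient positivity. Given that the inclusion is a standard parabolic one, this compatibility is automatic, so no genuine difficulty arises beyond citing the right results in the right order.
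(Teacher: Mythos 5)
Your proposal is correct and follows essentially the same route as the paper, which likewise observes that $T$ contains a subrack isomorphic to $T_{I_2(5)}$ and then cites Theorem \ref{TheoremGabriel}, Remark \ref{rem:deco} point \ref{item:inclusions}, and Proposition \ref{prop:dihedral-nichols}. Your additional remarks on the compatibility of the cocycles with the parabolic inclusion are a harmless elaboration of what the paper leaves implicit.
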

\bproof
The rack $T$ contains a subrack isomorphic to $T_{I_2(5)}$, so the statement follows from Theorem \ref{TheoremGabriel}, Remark \ref{rem:deco} \ref{item:inclusions} and Proposition \ref{prop:dihedral-nichols}.
\eproof

In the remaining cases  $W$ is in one of the crystallographic, simply-laced families of groups $A_n,\, n\geq 1$, $D_n$ for $n\geq 4$, and $E_6, E_7,E_8$.  They afford a crystallographic root system $\widetilde{\Phi}$ as in \cite{Bourbaki}*{Chapitre VI.1.1}, and all roots in $\widetilde{\Phi}$ have the same length. Here $T=\{s_\alpha~|~\alpha\in\widetilde{\Phi}^+\}$ and by \cite{Bourbaki}*{Chapitre VI.1.3} the subgroup generated by any pair of non-commuting reflections is isomorphic to ${\mathbb S}_3$. It is well-known that $\dim{\mathcal B}(T_{A_n},q^{\pm})<\infty$ for $n\leq 4$, \cites{MS,GGI}, whilst infinite-dimensionality for $n\geq 5$ is still open.  Observe that none of the splitting criteria in \cite{ACG}*{Sections 2.1, 2.2} and \cite{AFGV1}*{Section 3.2} apply to the rack $T$: in the terminology of \cite{ACG}, the rack $T$ is kthulhu. 

\begin{rem}{\rm It was kindly pointed out to us by I. Heckenberger that ${\mathcal B}(T_{D_4},q^{\pm})$ is infinite-dimensional as a consequence of \cite{AHV}*{Theorem 6.14} because the  Coxeter group of type $D_4$ is solvable, non-cyclic, and generated by $T$, which has size $>7$. Hence, $\dim{\mathcal B}(T_{D_n},q^{\pm})=\infty$ for any $n\geq4$.}
\end{rem}

In all remaining cases $\Gamma(W)$ contains a graph of type $A_5$ (i.e., ${\mathbb S}_6\leq W$). The case of ${\mathbb S}_6$ has been addressed by several authors: by the main result in \cite{Ba} it would be infinite-dimensional provided $\mathcal B(T,-1)$ is quadratic, but the latter property has not been established despite several attempts. Summarizing we have
\begin{cor}
Assume that $\dim{\mathcal B}(T_{A_5},q^{\pm})=\infty$. If $\dim{\mathcal B}(T,q^{\pm})<\infty$  then $W$ is either the dihedral group of order $8$, the cyclic group of order $2$, $\mathbb{S}_3$, $\mathbb{S}_4$ or $\mathbb{S}_5$.
\end{cor}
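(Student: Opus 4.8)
The statement is the synthesis of the results gathered in this section, so the plan is to run through the classification of irreducible finite Coxeter groups and, for each, cite the appropriate finiteness or infiniteness result; the standing hypothesis on $A_5$ enters only in the last family. Two preliminary observations make this legitimate: by Corollary~\ref{cor:gabriel} the algebras $\mathcal{B}(T,q^+)$ and $\mathcal{B}(T,q^-)$ have the same Hilbert series, so the conditions $\dim\mathcal{B}(T,q^+)<\infty$ and $\dim\mathcal{B}(T,q^-)<\infty$ are equivalent and may be treated at once; and by the first point of Remark~\ref{rem:deco}, finite-dimensionality of $\mathcal{B}(T,q^{\pm})$ for a reducible $W$ passes to each irreducible factor, so it suffices to decide the irreducible groups.

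First I would dispose of the groups whose Coxeter graph carries an even labeled edge, i.e.\ those with two conjugacy classes of reflections. By Remark~\ref{rem:deco}\ref{item:deco} these are exactly $B_l$ $(l\geq 3)$, $F_4$ and $I_2(2m)$ $(m\geq 2)$. For $B_l$ and $F_4$ the same point, via the size bound $\max(|T_1|,|T_2|)>4$ drawn from \cite{ACG}*{Theorem 2.9}, forces $\dim\mathcal{B}(T,q^{\pm})=\infty$; for $I_2(2m)$ with $m>2$ the last point of Remark~\ref{rem:deco} does the same. The single survivor in this family is $I_2(4)$, the dihedral group of order $8$, for which $\dim\mathcal{B}(T,q^{\pm})=64$ by \cite{MS}*{Example 6.5}, in agreement with the Table.

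Next I would treat the groups with a single class of reflections, organized by rank. In rank $1$ the group is $A_1$, the cyclic group of order $2$, whose Nichols algebra is $\bigwedge\Cc$ and hence finite-dimensional. The remaining rank-$2$ groups are the odd dihedral groups $I_2(2m+1)$: for $m=1$ this is $\mathbb{S}_3$, which is finite-dimensional, while for $m>1$ Proposition~\ref{prop:dihedral-nichols} gives $\dim\mathcal{B}(T,q^{\pm})=\infty$. For types $H_3$ and $H_4$ the corollary following Proposition~\ref{prop:dihedral-nichols} gives infinite dimension, and for type $D_n$ $(n\geq 4)$ the remark attributed to Heckenberger, through \cite{AHV}*{Theorem 6.14}, does likewise. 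This leaves the simply-laced type-$A$ and type-$E$ groups.

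Finally, for type $A_n$ the cases $n\leq 4$ are the classically finite-dimensional ones \cites{MS,GGI}, producing $\mathbb{S}_3,\mathbb{S}_4,\mathbb{S}_5$ together with the rank-$1$ case already found. For all the rest---$A_5$ itself by hypothesis, and $A_n$ with $n\geq 6$ together with $E_6,E_7,E_8$ because their Coxeter graphs contain a subgraph of type $A_5$ (for $E_6$ the straight branch of length five, and $E_6\subset E_7\subset E_8$)---I would argue as follows. By Remark~\ref{rem:deco}\ref{item:inclusions} a Coxeter graph inclusion yields a Nichols algebra inclusion $\mathcal{B}(T_{A_5},q^{-})\hookrightarrow\mathcal{B}(T,q^{-})$, transported to $q^{+}$ through Theorem~\ref{TheoremGabriel} and Corollary~\ref{cor:gabriel}; hence the standing hypothesis $\dim\mathcal{B}(T_{A_5},q^{\pm})=\infty$ forces $\dim\mathcal{B}(T,q^{\pm})=\infty$. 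Collecting the surviving cases yields precisely $A_1$, $I_2(4)$, $\mathbb{S}_3$, $\mathbb{S}_4$, $\mathbb{S}_5$, as claimed. The only step requiring genuine care rather than a citation is this last subdiagram argument: one must confirm the type-$A_5$ inclusion for each of $A_n$ $(n\geq 6)$, $E_6$, $E_7$, $E_8$, and check that Remark~\ref{rem:deco}\ref{item:inclusions}, stated for $q^-$, transfers to $q^+$ via twist-equivalence; everything else reduces to a direct appeal to a result already established above.
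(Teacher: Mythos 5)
Your proposal is correct and takes essentially the same route as the paper: the paper offers no separate proof but simply states the corollary as a summary ("Summarizing we have") of the case-by-case results established earlier in \S\ref{applications}, and your case analysis over the irreducible finite Coxeter groups reproduces exactly that chain of citations, including the final $A_5$-subdiagram argument for $A_n$ ($n\geq 6$) and $E_6,E_7,E_8$. Your added care about transporting the inclusion of Remark~\ref{rem:deco} from $q^-$ to $q^+$ via twist equivalence is a reasonable way to make explicit a step the paper leaves implicit.
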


Through the equivalence of categories in \cite{KS}, the Nichols algebra ${\mathcal B}(T,q^{-})$ corresponds to a factorizable perverse sheaf whose underlying perverse sheaf is the intermediate extension of the local system on the ind-variety of configuration spaces in $\Cc$, corresponding to the collection of braid group representations $V^{\otimes n}$, for $n\geq 0$ associated with $T$ and the trivial cocycle. The latter are obtained as the push-forward of the constant sheaf on the Hurwitz space with Galois group $W$ and local monodromy $T$. One may hope to retrieve further information on these algebras (e.g. if they are quadratic) by using this geometric interpretation.

\subsection{Nichols algebras over dihedral groups}
In this section we restrict to the case in which $W$ is of rank $2$, that is, $W$ is of type $I_2(n)$ for $n>2$ and we write $S=\{s,s'\}$. 
The analysis of the finite-dimensional Nichols algebras over $I_{2}(n)$ when $4|n$ and $n\geq12$ was obtained in \cite{FG}, the cases $n=4,8$ were then completed in  \cite{BC}*{Chapter 2}. Thus in this section $n$ is not divisible by $4$ and we separate the analysis according to its parity. 

\begin{cor}\label{cor:odd}
Assume $n=2m+1$ for $m>1$. Then, any Nichols algebra of a Yetter-Drinfeld module of $W$ is infinite-dimensional. Therefore, if $H$ is a finite-dimensional complex, pointed Hopf algebra with group of grouplikes isomorphic to $W$, then $H={\mathbb C}W$, the group algebra of $W$. 
\end{cor}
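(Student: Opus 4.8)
The plan is to analyze Yetter-Drinfeld modules over $W = I_2(2m+1)$ by decomposing them into irreducibles and reducing to Nichols algebras over conjugacy classes. First I would recall from Remark \ref{rem:cohomologous} point \ref{item:YD} that every Yetter-Drinfeld module is a direct sum of irreducibles, each of the form $V = {\rm Ind}_H^G U$ where $H$ is the centralizer of some $g \in W$ and $U$ is an irreducible $H$-representation, so that the underlying rack is the conjugacy class of $g$. The key structural fact is that a Nichols algebra containing a braided subspace of a given Nichols algebra is finite-dimensional only if the latter is, via Remark \ref{rem:cohomologous} point \ref{item:Nich-inclusion}. Hence if I can show that for \emph{every} non-central $g \in W$ and \emph{every} cocycle $q$ (or more generally every $\GL(U)$-valued cocycle) the associated Nichols algebra is infinite-dimensional, the result follows.

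Next I would run through the conjugacy classes of $W = I_2(2m+1)$. Since $2m+1$ is odd, the center is trivial and the non-identity classes are: the single class $T$ of the $2m+1$ reflections, and the classes of the $2m$ nontrivial rotations, which pair up into $m$ classes of size $2$ under inversion $\rho \mapsto \rho^{-1}$. For the class of reflections, Proposition \ref{prop:dihedral-nichols} already gives $\dim {\mathcal B}(T, q^{\pm}) = \infty$; combined with Theorem \ref{TheoremGabriel} and the fact (cited from \cite{MS}*{Remark 5.2}) that the relevant Yetter-Drinfeld modules arise as in point \ref{item:YD}, this handles every cocycle supported on $T$. For the rotation classes, the centralizer $H$ of a nontrivial rotation is the cyclic group $\langle s s' \rangle$ of order $2m+1$, so these classes are abelian (trivial-action) racks of size $2$; the associated braidings are diagonal of abelian group type, and the corresponding Nichols algebras are finite-dimensional only in very restricted diagonal-type situations that I would rule out by a direct inspection, or by observing that such a $2$-element abelian rack with a nontrivial braiding is a quantum-plane/exterior-type situation whose finite-dimensionality I would check does not contribute a finite-dimensional pointed Hopf algebra with \emph{full} grouplike group $W$.

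The subtler point, and what I expect to be the main obstacle, is passing from conjugacy classes equipped with a \emph{scalar} ($\Cc^*$-valued) cocycle to the general irreducible Yetter-Drinfeld module, where $\eta\colon H \to \GL(U)$ need not be one-dimensional, so one must work with $\GL(U)$-valued cocycles. Here I would invoke the general machinery: the enveloping techniques and the results of \cites{AHV,HMV} are stated for racks with arbitrary cocycles, and the infinite-dimensionality criteria (e.g. for indecomposable affine racks) do not require the cocycle to be scalar. Concretely, I would argue that for the reflection rack the criterion in \cite{HMV} or \cite{AHV} that forces $\dim = \infty$ depends only on the rack $T$ (being an indecomposable affine rack ${\rm Aff}(p_i,p_i-1)$ with $p_i \in \{5,7\}$ or a power of $3$), hence applies verbatim to any $\GL(U)$-valued cocycle, and similarly dispose of the rotation classes. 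Once every irreducible Yetter-Drinfeld module over $W$ yields an infinite-dimensional Nichols algebra, Remark \ref{rem:cohomologous} points \ref{item:Nich-inclusion} and \ref{item:tens-prod} show the same for every (necessarily reducible, containing a nontrivial isotypic summand) Yetter-Drinfeld module. The final assertion about pointed Hopf algebras then follows from the standard lifting dictionary: a finite-dimensional pointed Hopf algebra with coradical $\Cc W$ has an associated graded object $\bigoplus \mathrm{gr}\,H$ whose degree-one part is a Yetter-Drinfeld module $V$ with $\dim {\mathcal B}(V) < \infty$, forcing $V = 0$ and hence $H = \Cc W$.
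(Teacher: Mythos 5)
Your overall strategy --- decompose into irreducibles, reduce to conjugacy classes, and kill each class separately --- is a legitimate alternative route, but it is not the paper's, and it leaves a concrete hole. The paper's proof is much shorter: it invokes \cite{AHS}*{Theorem 4.8} to conclude at once that the \emph{only} Yetter--Drinfeld module over $W=I_2(2m+1)$ that could possibly have a finite-dimensional Nichols algebra is the irreducible one supported on the reflection class with the non-trivial character of the order-two centralizer, identifies that module with the pair $(T,q^+)$ via \cite{MS}*{Section 5}, and then applies Proposition \ref{prop:dihedral-nichols}; the lifting statement is quoted from \cite{AG}*{\S 0.3}. Incidentally, the ``main obstacle'' you anticipate --- $\GL(U)$-valued cocycles with $\dim U>1$ --- does not arise here: every non-central element of $W$ (a reflection or a non-trivial rotation) has abelian centralizer ($\Zz/2\Zz$ or the rotation subgroup $C$), so every irreducible Yetter--Drinfeld module is induced from a character and the cocycle is automatically scalar. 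Your proposed fix via \cite{HMV} and \cite{AHV} ``depending only on the rack'' is therefore unnecessary, and as stated it is also unjustified, since the criteria in \cite{HMV}*{Table 1} genuinely depend on the cocycle.

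The genuine gap is the rotation classes. An irreducible module supported on $\{\rho,\rho^{-1}\}$ with character $\chi$ of $C$ is of diagonal type with $q_{11}=q_{22}=\chi(\rho)$ and $q_{12}q_{21}=\chi(\rho)^{-2}$, where $\chi(\rho)$ is a root of unity of odd order. For $\chi\neq 1$ this braiding is of affine Cartan type $\left(\begin{smallmatrix}2&-2\\-2&2\end{smallmatrix}\right)$ (the rank-one pieces are finite-dimensional quantum lines, so the inclusion trick does not help), and infinite-dimensionality requires an appeal to the classification of diagonal braidings, e.g.\ \cite{He1} and \cite{He2}; for $\chi=1$, and for modules supported at the identity, the braiding is symmetric and the Nichols algebra contains a polynomial ring. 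You defer all of this to ``direct inspection'' without carrying it out, and your fallback argument --- that such a module does not have full support and hence contributes no pointed Hopf algebra with grouplikes exactly $W$ --- would not prove the first assertion of the corollary, which claims that \emph{every} Yetter--Drinfeld module over $W$, irrespective of its support, has infinite-dimensional Nichols algebra. With the rotation classes actually treated (or with \cite{AHS}*{Theorem 4.8} cited, as the paper does), the rest of your argument --- the reflection class via Proposition \ref{prop:dihedral-nichols} and Theorem \ref{TheoremGabriel}, reducible modules via the braided-subspace inclusion of Remark \ref{rem:cohomologous}, and the standard coradical-filtration argument for the final statement --- is sound.
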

\begin{proof}By \cite{AHS}*{Theorem 4.8}, the only possible finite-dimensional Nichols algebra coming from a Yetter-Drinfeld module for $W$ could come from the irreducible Yetter-Drinfeld module as in Remark \ref{rem:cohomologous} point \ref{item:YD} with $g\in S$  and $\eta$ the non-trivial irreducible representation of $H=\langle g\rangle$. However, this corresponds precisely to the pair $(T_{I_2(2m+1)}, q^+)$, cf. \cite{MS}*{Section 5}, which has an infinite-dimensional Nichols algebra by Proposition \ref{prop:dihedral-nichols}. The second statement follows from \cite{AG}*{\S 0.3}.
\end{proof}

We will now look at the case  $n=2r$, where $r$ is odd. Let $\zeta\in\Cc^*$ be a primitive $n$-th root of $1$ and let $C:=\langle ss'\rangle$. 
The following irreducible Yetter-Drinfeld modules over $W$ have a finite-dimensional associated Nichols algebra, \cite{AF}*{Theorem 3.1}. The action can be extracted from \cite{serre}*{5.3}. The grading of all of them is supported in $C$, thus to understand the braiding, it is enough to consider the $C$-action. This way, we can regard them as Yetter-Drinfeld modules over $C$.
\begin{itemize}
\item  $V_0:=\Cc v_0$, concentrated in degree $(ss')^r$ and with action $(ss')v_0=-v_0$. The Nichols algebra is $\bigwedge V_0$;
\item $V_{r,j}:=\Cc v_{+r,j}\oplus \Cc v_{-r,j}$, for $j\in\{1,\,\ldots,\,r-2\}$ and odd, with grading concentrated in degree $(ss')^r$ and action $(ss')v_{\pm r,j}=\zeta^{\pm j}v_{\pm r, j}$. The Nichols algebra is $\bigwedge V_{r,j}$;
\item $V_{h,j}:=\Cc v_{+h,j}\oplus \Cc v_{-h,j}$, for $j,h\in\{1,\,\ldots,\, r-2\}$, both odd, and such that $r|jh$, 
with $v_{\pm h,j}$ in degree $(ss')^{\pm h}$, respectively, and action $(ss')v_{\pm h,j}=\zeta^{\pm j}v_{\pm h,j}$. The Nichols algebra is $\bigwedge V_{h,j}$.
\end{itemize}

\begin{thm}\label{thm:2r}
Assume $n=2r$ for $r>3$ and odd. Let $V$ be a Yetter-Drinfeld module of $W$. Then $\dim{\mathcal B}(V)<\infty$ if and only if as a Yetter-Drinfeld module over $C$ there holds:
\begin{equation*}V\simeq V_0^{\oplus k}\oplus \bigoplus_{d=1}^{N} V_{h_d,j_d}^{\oplus k_d}\end{equation*} for some $k,k_d,N\geq0$, and some distinct pairs $(h_d,j_d)$ for $d\in\{1,\,\ldots,\,N\}$
with $h_d\in\{1,\,\ldots,\, r\}$ odd, $j_d\in\{1,\,\ldots,\,r-2\}$ odd and such that $r|(h_{d}j_{d'}+h_{d'}j_d)$ for all $d,d'\in\{1,\,\ldots,\,N\}$. In this case,
\begin{equation*}\label{eq:nichols}\mathcal{B}(V)\simeq \bigwedge V_0^{\oplus k}\;\widetilde{\otimes}\left(\widetilde\bigotimes_{d=1}^{N} \bigwedge V_{h_d,j_d}^{\oplus k_d}\right).\end{equation*} where $\widetilde{\otimes}$ stands for the tensor product of algebras twisted by the braiding of $V$.
\end{thm}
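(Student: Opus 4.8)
The plan is to exploit the fact that every Yetter–Drinfeld module that can possibly carry a finite-dimensional Nichols algebra is supported on the rotation subgroup $C=\langle ss'\rangle$, which is abelian. Restricting the grading and the action to $C$ turns $(V,c_V)$ into a braided vector space of \emph{diagonal} type once the $C$-action is diagonalized, so that the classification of finite-dimensional Nichols algebras of diagonal type together with the results of \cite{AHV,HMV} becomes available. Since $c_V(v\otimes w)=(\deg v)\,w\otimes v$ and all relevant degrees lie in $C$, the braiding only sees the $C$-action, whence ${\mathcal B}(V)$ depends only on the $C$-Yetter–Drinfeld structure; this justifies phrasing the whole statement over $C$.

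First I would dispose of the forbidden isotypic components. If $V$ had a summand supported on one of the two reflection classes, that summand would be a sub-braided-vector-space isomorphic to $(T_{I_2(r)},q)$ for some cocycle $q$, since the centraliser of a reflection is the Klein four group and hence $\eta$ is one-dimensional (Remark \ref{rem:cohomologous} point \ref{item:YD}), while $T_1\simeq T_{I_2(r)}$ by Remark \ref{rem:deco}. The proof of Proposition \ref{prop:dihedral-nichols} is purely rack-theoretic and applies to every cocycle, so $\dim{\mathcal B}(T_{I_2(r)},q)=\infty$ for $r>3$ odd, and Remark \ref{rem:cohomologous} point \ref{item:Nich-inclusion} forces $\dim{\mathcal B}(V)=\infty$. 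A summand supported at $1$, or at the central element $z=c^r$ with $z$ acting by $+1$, yields the flip braiding and hence an infinite symmetric algebra. Thus finiteness forces $V$ to be supported in $C$ and every $C$-weight $(g,\chi)$ to satisfy $\chi(g)=-1$. A direct eigenvalue computation, using $\zeta^r=-1$, shows that this happens exactly for the building blocks $V_0$ and $V_{h,j}$ with $h\in\{1,\dots,r\}$ and $j$ both odd and $r\mid hj$; this identifies the blocks appearing in \cite{AF}*{Theorem 3.1}, which also supplies their exterior Nichols algebras.

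Next I would analyse the interaction of two building blocks. For weights coming from $V_{h_d,j_d}$ and $V_{h_{d'},j_{d'}}$ (and from $V_0$) the product of the two crossed braiding scalars equals $\zeta^{\pm(h_dj_{d'}+h_{d'}j_d)}$. The key is a parity observation: $h_dj_{d'}+h_{d'}j_d$ is even while $r$ is odd, so this scalar lies in $\mu_r$ and is therefore either $1$ or a nontrivial $r$-th root of unity — it is \emph{never} $-1$. Hence between two vertices both labelled $-1$ a nonzero edge can never be completed to an admissible Cartan integer nor matched to any exotic finite row of the rank-two list, so by the classification of arithmetic root systems and \cite{AHV,HMV} the resulting diagram is infinite-dimensional. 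Consequently $\dim{\mathcal B}(V)<\infty$ forces every crossed product to equal $1$, i.e. $2r\mid(h_dj_{d'}+h_{d'}j_d)$, which by the same parity is equivalent to the stated condition $r\mid(h_dj_{d'}+h_{d'}j_d)$. The main obstacle is precisely this step: making the reduction to diagonal type rigorous and citing the classification so that a single nontrivial edge already rules out finiteness; the parity argument, showing the edge can never take the harmless value $-1$, is what makes the criterion clean.

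Finally, when all these conditions hold the generalized Dynkin diagram of $V|_C$ has all vertices labelled $-1$ and no edges, so each block contributes its exterior algebra and any two distinct blocks satisfy $c_{q}^2=\id$ on the mixed tensors. Remark \ref{rem:cohomologous} point \ref{item:tens-prod} then assembles these into the twisted tensor product $\bigwedge V_0^{\oplus k}\,\widetilde{\otimes}\bigl(\widetilde\bigotimes_{d=1}^N\bigwedge V_{h_d,j_d}^{\oplus k_d}\bigr)$, which is finite-dimensional; this simultaneously establishes the converse direction and yields the displayed isomorphism.
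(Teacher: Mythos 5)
Your overall architecture (reduce to $C$-supported modules, identify the admissible irreducible blocks via \cite{AF}, then control the cross-terms between blocks and assemble with the twisted tensor product) matches the paper's. But the key step — ruling out a pair of blocks with $r\nmid(h_dj_{d'}+h_{d'}j_d)$ — contains a genuine gap. You argue at the level of a \emph{rank-two} subdiagram: two vertices labelled $-1$ joined by an edge $\xi=\zeta^{h_dj_{d'}+h_{d'}j_d}\neq 1$, and you claim that since $\xi\neq-1$ this can never be completed to a finite row of the rank-two list. That claim is false: the generalized Dynkin diagram with $q_{11}=q_{22}=-1$ and $q_{12}q_{21}=\xi$ an \emph{arbitrary} root of unity is always a finite arithmetic root system (the quantum Serre relations $(\operatorname{ad}x_i)^2x_j=0$ hold automatically because $(2)_{-1}=0$, the positive roots are $\alpha_1,\alpha_1+\alpha_2,\alpha_2$, and $\dim\mathcal B=4\operatorname{ord}(\xi)$; e.g.\ for $\xi$ of order $3$ one gets dimension $12$). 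So your parity observation, while correct, buys nothing: no rank-two subdiagram of the configuration at hand is ever an obstruction, whether $\xi$ equals $-1$ or not.

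What actually kills these pairs is the \emph{rank-four} diagram of $V_{h_d,j_d}\oplus V_{h_{d'},j_{d'}}$: the four weight vectors $v_{\pm h_d,j_d},v_{\pm h_{d'},j_{d'}}$ all carry the label $-1$, there is no edge inside either block (the product of the two mixed scalars within a block is $\zeta^{-2h_dj_d}=1$), and the four cross-edges form a $4$-cycle with alternating labels $\xi,\xi^{-1},\xi,\xi^{-1}$. This cyclic diagram does not occur in Heckenberger's classification of arithmetic root systems (\cite{He1}*{Table 3}), and only then does \cite{He2} give $\dim\mathcal B(V_{h_d,j_d}\oplus V_{h_{d'},j_{d'}})=\infty$; this is exactly how the paper argues. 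A secondary, smaller imprecision: your assertion that finiteness forces every $C$-weight $(g,\chi)$ to satisfy $\chi(g)=-1$ by "a direct eigenvalue computation" is not a pointwise statement — for $V_{h,j}$ with $r\nmid hj$ the diagram is $(q,q^{-2},q)$ with $q=\zeta^{hj}\neq\pm1$, and excluding it again requires the rank-two classification (or the citation of \cite{AF}*{Theorem 3.1}, which you do eventually invoke, so this part is repairable). The forward implication and the final assembly via the twisted tensor product are fine and agree with the paper.
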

\bproof
 The analysis in \cite{AF}*{Theorem 3.1}, combined with Proposition \ref{prop:dihedral-nichols} and Remark \ref{rem:deco} point \ref{item:deco} shows that the only irreducible Yetter-Drinfeld modules of $W$ with finite-dimensional Nichols algebras are $V_0$ and $V_{h,j}$ with $h\in\{1,\,\ldots,\, r\}$ odd, $j\in\{1,\,\ldots,\,r-2\}$ odd and such that $r|jh$. Now we look at direct sums. 

A direct calculation shows that the braiding $c$ satisfies 
$c(x\otimes y)=-y\otimes x$ for $x\in V_0$, $y\in U$ or $x\in U$ and $y\in V_0$. Then we have the isomorphism  ${\mathcal B}(V_0^{\oplus k}\oplus U)\simeq \bigwedge V_0^{\oplus k}\,\widetilde{\otimes}{\mathcal B}(U)$ for any $k\geq 0$ after an iterated application of  Remark \ref{rem:cohomologous} point \ref{item:tens-prod}.

We now look at the braiding on $V_{h,j}\oplus V_{h',j'}$ where: $h,j,h',j'$ are odd; $1\leq h,h'\leq r$, $1\leq j,j'\leq r-2$, and sastisfy  $r|hj$ and $r|h'j'$. 
For $\epsilon,\epsilon'\in\{\pm 1\}$ we have \begin{equation}c(v_{\epsilon h,j}\otimes v_{\epsilon' h',j'})=\zeta^{\epsilon\epsilon' hj'}v_{\epsilon' h',j'}\otimes v_{\epsilon h,j}.\end{equation} 
Let  $\xi:=\zeta^{hj'+jh'}$.  If  $\xi=1$, that is, if $r|(hj'+jh')$, then $c^2|_{V_{h,j}\oplus V_{h',j'}}=\id$, and so ${\mathcal B}(V_{h,j}\oplus V_{h',j'})\simeq{\mathcal B}(V_{h,j})\widetilde{\otimes}{\mathcal B}(V_{h',j'})$. If, instead, $\xi\neq 1$, then we compute the generalized Dynkin diagram associated to the braiding on $V_{h,j}\oplus V_{h',j'}$ according to the recipe in \cite{He1}*{\S 2} obtaining  
\begin{equation*}\label{eq:DD-braiding}
\begin{tikzpicture}
\draw[thick] (0.1,0)--(1.9,0) node[midway, below] {$\xi$};
\draw[thick] (0.1,2)--(1.9,2) node[midway,above] {$\xi$};
\draw[thick] (0,1.9)--(0,0.1) node[midway,left] {$\xi^{-1}$};
\draw[thick] (2,1.9)--(2,0.1) node[midway,right] {$\xi^{-1}$};
\draw (2,0) circle (1mm) node[right] {$-1$};
\draw (2,2) circle (1mm) node[right] {$-1$};
\draw (0,2) circle (1mm) node[left] {$-1$};
\draw (0,0) circle (1mm) node[left] {$-1$};
\end{tikzpicture}
\end{equation*}
The above diagram does not occur in \cite{He1}*{Table 3}, hence $\dim{\mathcal B}(V_{h,j}\oplus V_{h',j'})=\infty$ by \cite{He2}*{Theorem 3, Corollary 5}. 

Let now $U=\bigoplus_{d=1}^{N} V_{h_d,j_d}^{\oplus k_d}$ for some $k_d,N\geq0$, 
$h_d\in\{1,\,\ldots,\, r\}$ odd, $j_d\in\{1,\,\ldots,\,r-2\}$ odd and such that $r|h_dj_d$ for all $d$. 
If $r$ does not divide $h_{d}j_{d'}+h_{d'}j_d$ for some $d, d'$, then $\dim{\mathcal B}(U)=\infty$ by Remark \ref{rem:cohomologous} point \ref{item:inclusions}. If, instead,  $r|(h_{d}j_{d'}+h_{d'}j_d)$ for all $d, d'$, then the square of the braiding between any irreducible component of $U$ and the sum of the others is the identity. Thus, by Remark \ref{rem:cohomologous} point \ref{item:tens-prod} and induction on the number of irreducible components, we obtain 
\begin{equation*}{\mathcal B}(U)\simeq \widetilde{\bigotimes}_{d=1}^{N}{\mathcal B}(V_{h_d,j_d}^{\oplus k_d})\simeq \widetilde{\bigotimes}_{d=1}^{N}\left(\bigwedge V_{h_d,j_d}\right)^{\widetilde{\otimes}k_d}.\end{equation*}
Finally, observe that if $d=d'$ then $\zeta^{\epsilon\epsilon'h_dj_{d'}}=-1$, so $\left(\bigwedge V_{h_d,j_d}\right)^{\widetilde{\otimes}k_d}=\bigwedge V_{h_d,j_d}^{\oplus k_d}$. 
\eproof
We now complement  \cite{AF}*{Table 2} for $W=I_2(6)$ with the analysis of non-simple Yetter-Drinfeld modules. Here we have to take into account also the simple Yetter-Drinfeld modules supported in $T_1=W\conj s$ and $T_2=W\conj s'$, whose corresponding Nichols algebra is the Fomin-Kirillov algebra $FK_3$ of dimension $12$. 

The Yetter-Drinfeld modules whose associated braided spaces correspond to $(T_1, q^{\pm})$ are the modules $U_j$ for $j\in\{0,1\}$ with underlying vector space $\Cc e^j_{s}\oplus \Cc e^j_{s'ss'}\oplus \Cc e^j_{ss'ss's}$ where $e^j_g$ is in degree $g$ for $g\in W$, and the action is given by
\begin{align*}
&s\cdot e^j_s=-e^j_s,  &&s\cdot e^j_{s'ss'}=(-1)^{j+1}e^j_{ss'ss's},&& s\cdot e^j_{ss'ss's}=-(-1)^je^j_{s'ss'},\\
&(ss')\cdot e^j_s=e^j_{ss'ss's} ,&&(ss')\cdot e^j_{s'ss'}=(-1)^je^j_s,&&(ss')\cdot e^j_{ss'ss's}=e^j_{s'ss'}.
\end{align*}
  Similarly, the Yetter-Drinfeld modules  whose associated braided spaces correspond to $(T_2, q^{\pm})$ are the modules $U_j'$ for $j\in\{0,1\}$ with underlying vector space $\Cc e^j_{s'}\oplus \Cc e^j_{ss's}\oplus \Cc e^j_{s'ss'ss'}$ where $e_g$ is in degree $g$ for $g\in W$, and the action is given by
\begin{align*}
&s\cdot e^j_{s'}=-e^j_{ss's},  &&s\cdot e^j_{ss's}=-e^j_{s'},&& s\cdot e^j_{s'ss'ss'}=(-1)^{j+1}e^j_{s'ss's s'},\\
&(ss')\cdot e^j_{s'}=e^j_{ss's} ,&&(ss')\cdot e^j_{ss's}=e^j_{s'ss'ss'},&&(ss')\cdot e^j_{ss'ss's}=(-1)^je^j_{s'}.
\end{align*}
 In addition, to consider the braiding on sums of irreducible modules, we need to take into account the full action of $W$ on the modules $V_0$ and $V_{3,1}$ following \cite{AF}*{Theorem 3.1}. 
 According to \cite{serre}*{5.3} we have $s\cdot v_{\pm1,1}=v_{\mp 1,1}$ on $V_{3,1}$ and two possibilities for extending the action on $V_0$ from $C$ to $W$: we denote the two extensions by  $V_0^{j}:=\Cc v_0^{j}$ for $j\in\{0,1\}$  and set $s\cdot v_0^{j}=(-1)^j v_0^{j}$.

Twisting Yetter-Drinfeld modules as in Remark \ref{rem:cohomologous} point \ref{item:autom}, by the automorphism $\tau$ of $W$ that swaps $s$ and $s'$, preserves the isomorphism class of $V_{3,1}$ and interchanges $V_0^0$ and $V_0^1$. For $j=0,1$, it interchanges $U_j$ and $U_j'$.
 
\begin{prop}\label{prop:6}
Let $W=I_2(6)$ and $V$ be a Yetter-Drinfeld module for which $\dim{\mathcal B}(V)<\infty$. Then $V$ is isomorphic to one of the following modules: 
\begin{align*}
&V_{3,1}^{\oplus a}\oplus (V_0^0)^{\oplus b}\oplus (V_0^1)^{\oplus c},&&a,b,c\geq0, a+b+c\geq1;\\
&U_j\oplus(V_0^j)^{\oplus a}, \mbox{ or } U'_j\oplus(V_0^{1-j})^{\oplus a},&&j\in\{1,2\}, a\geq0;\\
&U\oplus V_{3,1},&&U\in\{U_0,U_1,U_0',U_1'\}
\end{align*}
and $\mathcal B(V)$ is, respectively,
\(\left(\bigwedge V_{3,1}\right)^{\widetilde\otimes a}\widetilde\otimes \left(\bigwedge V_0^0\right)^{\widetilde\otimes b}\otimes\left(\bigwedge V_0^1\right)^{\widetilde\otimes c};\) 
\(FK_3\widetilde\otimes \left(\bigwedge V_0^0\right)^{\widetilde\otimes a}\) and
the $2304$-dimensional Nichols algebra in \cite{HV}*{Theorem 8.2}, where $\widetilde\otimes$ denotes a twisted tensor product.
\end{prop}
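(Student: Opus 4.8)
The plan is to reduce the classification to a finite problem about the admissible irreducible summands and then to control their direct sums through the braiding. First I would pin down the irreducible building blocks. By Remark \ref{rem:cohomologous} point \ref{item:Nich-inclusion}, if $\mathcal{B}(V)$ is finite-dimensional then so is $\mathcal{B}(V')$ for every Yetter-Drinfeld submodule $V'$; in particular every irreducible summand of $V$ must itself carry a finite-dimensional Nichols algebra. Combining the analysis of \cite{AF}*{Theorem 3.1} for the summands supported in $C=\langle ss'\rangle$ with the $FK_3$ description of the summands supported on the reflection classes $T_1,T_2$, the only admissible irreducibles are $V_0^0,V_0^1,V_{3,1}$ (all supported in the central degree $(ss')^3$) and $U_0,U_1,U_0',U_1'$ (supported on reflections). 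Hence $V$ must be a direct sum of copies of these seven modules, and the task reduces to deciding which multiplicity patterns keep $\dim\mathcal{B}(V)$ finite.

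Next I would analyse the pairwise braidings, since the square of the braiding on cross terms is exactly what decides whether Remark \ref{rem:cohomologous} point \ref{item:tens-prod} applies. On the three central blocks the element $(ss')^3$ acts by $-1$, so the braiding on their span squares to the identity and is of diagonal type; this yields an exterior algebra and produces the first family together with its stated Nichols algebra. For the mixed pairs one computes $c^2$ directly: between $U_j$ and $V_0^k$ the braiding squares to the scalar $(-1)^{j+k}$, so $c^2=\id$ precisely when $k=j$, in which case Remark \ref{rem:cohomologous} point \ref{item:tens-prod} gives $FK_3\,\widetilde{\otimes}\,\bigwedge V_0^j$; the reflections act non-scalarly on the two-dimensional block $V_{3,1}$, so for the pair $U_j\oplus V_{3,1}$ one has $c^2\neq\id$ and a genuinely non-abelian braiding. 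The remaining reflection-involving pairs must also be treated: $U_j\oplus U'_{j'}$ is supported on the full reflection rack $T=T_1\sqcup T_2$, whence $\dim\mathcal{B}=\infty$ for every cocycle by Remark \ref{rem:deco} point \ref{item:dihedral}.

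For all the pairs that are not covered by a squares-to-identity (tensor-decomposition) argument I would invoke the rank-two classification of \cite{HV}. This identifies $\mathcal{B}(U_j\oplus V_{3,1})$ with the $2304$-dimensional algebra of \cite{HV}*{Theorem 8.2}, and shows that $U_j\oplus V_0^{1-j}$, as well as any sum of two reflection blocks in the same class, is infinite-dimensional. I would then assemble the global statement: whenever a block squares-commutes with all the others it factors off as a twisted tensor factor by Remark \ref{rem:cohomologous} point \ref{item:tens-prod}, while by Remark \ref{rem:cohomologous} point \ref{item:Nich-inclusion} the presence of a single pair with infinite-dimensional Nichols algebra forces $\dim\mathcal{B}(V)=\infty$. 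Tracking the admissible pairs — the three central blocks freely among themselves, $U_j$ only with copies of the matching $V_0^j$, and each $U$ with a single $V_{3,1}$ — yields exactly the three families, and the twisted tensor decompositions compute $\mathcal{B}(V)$ in each case; to make the bookkeeping for three or more blocks airtight I would lean on the classification of finite-dimensional Nichols algebras of semisimple Yetter-Drinfeld modules in \cite{HMV} and \cite{AHV}, specialized to $W=I_2(6)$.

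The hard part will be the rank-two analysis of the non-abelian pair $U_j\oplus V_{3,1}$: its braiding couples the Fomin-Kirillov block to the two-dimensional representation through the non-scalar action of reflections, so neither finiteness nor the precise value $2304$ can be read off from a diagonal sub-diagram, and both must be extracted from \cite{HV}*{Theorem 8.2}. The second delicate point is to certify that none of the admissible configurations can be enlarged beyond the three families, i.e. that the excluded combinations really are infinite-dimensional; here the sub-object inclusion of Remark \ref{rem:cohomologous} point \ref{item:Nich-inclusion}, the rack-level input $\dim\mathcal{B}(T,q)=\infty$ of Remark \ref{rem:deco}, and Proposition \ref{prop:dihedral-nichols} are the decisive tools.
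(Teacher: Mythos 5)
Your overall strategy matches the paper's: restrict to the admissible irreducibles from \cite{AF}, compute the squares of the cross-braidings, factor off the summands on which $c^2=\id$ via Remark \ref{rem:cohomologous} point \ref{item:tens-prod}, and feed the genuinely coupled pairs into the rank-two classification of \cite{HV}. Your pairwise computations ($c^2$ acting by $(-1)^{j+j'}$ between $U_j$ and $V_0^{j'}$, and $c^2\neq\id$ between $U_j$ and $V_{3,1}$ because reflections act non-scalarly there) agree with the paper, and your disposal of $U_j\oplus U'_{j'}$ through the full reflection rack $T=T_1\sqcup T_2$ and the rack-level criterion of Remark \ref{rem:deco} is a legitimate shortcut for $I_2(6)$, where the paper instead goes through \cite{HV-JA}*{Corollary 7.2} and a presentation of the group $\Gamma_4$.

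The genuine gap is the multi-component bookkeeping, which you flag but do not close and for which you cite the wrong tools. A purely pairwise analysis cannot rule out, for instance, $U_0\oplus V_{3,1}^{\oplus 2}$: every pair of irreducible summands there has a finite-dimensional Nichols algebra, and no tensor factorization is available because $c^2\neq\id$ on $U_0\otimes V_{3,1}$, so neither of your two assembly principles (``factor off when $c^2=\id$'' and ``an infinite-dimensional pair forces infinite dimension'') applies. The paper deals with exactly this configuration by observing that $U_0\oplus V_{3,1}^{\oplus k}$ is braid-indecomposable and that $V_{3,1}$ is induced from a $2$-dimensional representation of the centraliser of $(ss')^3$, and then invoking \cite{HV-skeleton}*{Theorem 2.5}. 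The references you propose to lean on for this step, \cite{HMV} (simple Yetter-Drinfeld modules of prime dimension) and \cite{AHV} (solvable groups), do not classify semisimple Yetter-Drinfeld modules with several summands and will not do this job; the relevant sources are \cite{HV-skeleton} and \cite{HV}. Two smaller points: ruling out $U_0^{\oplus a}\oplus U_1^{\oplus b}$ with $a+b\geq 2$ needs its own argument (the paper uses \cite{AHS}*{Theorem 4.8}, since the support generates a copy of ${\mathbb S}_3$); and the exclusion of $U_j\oplus V_0^{1-j}$ is not a mere lookup --- the paper identifies the support rack as $Z^{3,1}_3$, applies \cite{HV}*{Theorem 2.1}, and checks that the condition $1-\rho(z)\sigma(g)+(\rho(z)\sigma(g))^2=0$ of \cite{HV}*{Example 1.10} fails because the supports consist of involutions.
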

\bproof By \cite{AF}*{Table 2}, if $V$ is irreducible then it is either $V_{3,1}$, $V_0^{j}$, $U_j$, or $U_j'$ for $j\in\{0,\,1\}$.
Sums of copies of $V_0^{0}$, $V_0^1$ and of $V_{3,1}$ can be handled as in the proof of Theorem \ref{thm:2r} because their grading is supported in $C$. Observe that the supports of $U_0$ and $U_1$ generate a group isomorphic to ${\mathbb S}_3$. Thus, by \cite{AHS}*{Theorem 4.8} if $\dim{\mathcal B}(U_0^a\oplus U_1^b)<\infty$ for some $(a,b)\neq(0,0)$, then $a+b=1$. The same argument applies to $U'_0$ and $U'_1$. 
For $\sigma\in T_1$ and $j,j'\in\{0,1\}$ we have $c^2(e_\sigma^j\otimes v_0^{j'})=(-1)^{j+j'}e_\sigma^j\otimes v_0^{j'}$ so ${\mathcal B}((V_0^{j'})^{\oplus k}\oplus U_j)$ is a twisted tensor product of ${\mathcal B}((V_0^{j'})^{\oplus k})$ and ${\mathcal B}(U_j)$ if and only if $j=j'$. Remark \ref{rem:cohomologous} point \ref{item:autom} implies that the same property holds for the pair $V_0^{1-j}=(V_0^j)^\tau$ and $U'_j=U_j^\tau$. 

For all other pairs $(X,Y)$ of irreducible modules the support of $X\oplus Y$ generates $W$ and $(c^2-\id)(X\oplus Y)\neq 0$. Under these conditions, \cite{HV-JA}*{Corollary 7.2} gives a precise list of the possible supports for $X\oplus Y$ such that  $\dim\mathcal B(X\oplus Y)<\infty$ and for each support, further conditions on $W$. 
In particular, if the size of the support is $6$, then $W$ must be a quotient of the group $\Gamma_4$ generated by $\mathrm a$, $\mathrm b$, $\nu$, such that $\nu^4=1$, $\nu\mathrm a=\mathrm a\nu^{-1}$, $\nu\mathrm b=\mathrm b\nu^{-1}$, and $\mathrm b\mathrm a=\nu \mathrm a\mathrm b$. A direct calculation shows that this is not the case.  Hence, $\dim{\mathcal B}(X\oplus Y)=\infty$ for $(X,Y)=(U_j,U'_{j'})$ for any $j,j'\in \{0,1\}$. We are left with the pairs $(U_0,V^1_0)$, $(U_1,V_0^0)$, $(U_0,V_{3,1})$, and $(U_1,V_{3,1})$ and their twistings by $\tau$. As a rack, their support is isomorphic to the one denoted by $Z^{3,1}_3$ in loc. cit. By \cite{HV}*{Theorem 2.1}, 
the only pair of Yetter-Drinfeld modules $X,Y$ of dimension respectively $3$ and $1$ and such that $X\oplus Y$ is supported by $Z^{3,1}_3$ is the one occurring in \cite{HV}*{Example 1.10}. There, it is required that, for $z$ in the support of $Y$ and $g$ in the support of $X$ there holds $1-\rho(z)\sigma(g)+(\rho(z)\sigma(g))^2=0$, where $\rho$ denotes the action on the homogeneous component $X_g$ and $\sigma$ denotes the action on the homogeneous component $Y_z$. This cannot apply to the pairs $(U_0,V^1_0)$ and $(U_1,V_0^0)$ because the supports contain only involutions. The pair $(U_0,V_{3,1})$ occurs in \cite{HV}*{Example 1.11} for $g=s$, $z=(ss')^3$ and $\epsilon=(ss')^2$, hence ${\mathcal B}(U_0\oplus V_{3,1})$ is the algebra described in \cite{HV}*{Theorem 8.4} whose dimension is $2304$. Now, as $(T_1,q^-)$ and $(T_1,q^+)$ are twist equivalent by Theorem \ref{TheoremGabriel} the cocycles corresponding to $U_0$ and $U_1$ are twist-equivalent, and therefore the cocycles corresponding to $U_0\oplus V_{3,1}$ is twist-equivalent to the cocycle corresponding to $U_1\oplus V_{3,1}$, so ${\mathcal B}(U_0\oplus V_{3,1})$ and ${\mathcal B}(U_1\oplus V_{3,1})$ are twist equivalent. We finally need to consider the Yetter-Drinfeld modules of the form $U_0\oplus V_{3,1}^{\oplus k}$ for $k>1$. They are all braid indecomposable because $(\id-c^2)(U_0\otimes V_{3,1})\neq0$, and $V_{3.1}$ is induced from a $2$-dimensional representation of $W$, that is the centraliser of $(ss')^3$. Hence this case is ruled out by \cite{HV-skeleton}*{Theorem 2.5}.    
\eproof

 Since the cases of $I_2(3)={\mathbb S}_3$ and $I_2(4m)$ are to be found in \cites{AHS, FG,BC},  Corollary \ref{cor:odd}, Theorem \ref{thm:2r} and Proposition \ref{prop:6} conclude the classification of finite-dimensional Nichols algebras over dihedral groups. 

\section{Acknowledgements}

We wish to thank N. Andruskiewitsch for useful discussions and I. Heckenberger for pointing out the content of Remark 4.5. 

This research has been carried out during an internship of Gabriel Maret at the Scuola Galileiana di Studi Superiori of the University of Padova, in the framework of the Programme Italie of the Ecole Normale Sup\'erieure de Paris. 

Project funded by the European Union – Next Generation EU under the National
Recovery and Resilience Plan (NRRP), Mission 4 Component 2 Investment 1.1 -
Call PRIN 2022 No. 104 of February 2, 2022 of Italian Ministry of
University and Research; Project 2022S8SSW2 (subject area: PE - Physical
Sciences and Engineering) ``Algebraic and geometric aspects of Lie theory''.


\begin{thebibliography}{20}
\bibitem{ACG}N. Andruskiewitsch, G. Carnovale, G. A. Garc\'ia, \emph{Finite-dimensional pointed Hopf algebras over finite simple groups of Lie type III. Semisimple classes in PSLn(q)}, Rev. Mat. Iberoam. 33 (2017), no. 3, 995--1024.

\bibitem{AF}N. Andruskiewitsch, F. Fantino,
\emph{On pointed Hopf algebras associated with alternating and dihedral groups}, Revista de la Uni\'on Matematica Argentina 48 (2007), no. 3,  57--71.

 \bibitem{AFGV}N. Andruskiewitsch, F. Fantino, G. A. Garc\'ia, L. Vendramin, \emph{On Nichols algebras associated to simple racks}, Contemp. Math 537 (2011), 31–56.

\bibitem{AFGaV}N. Andruskiewitsch, F. Fantino, G. A. Garc\'ia, L. Vendramin, \emph{On
twisted homogeneous racks of type D},  Revista de la Uni\'on Matem\'atica Argentina vol. 51-2 (2010), 1--16.

\bibitem{AFGV1}N. Andruskiewitsch, F. Fantino, M. Gra\~na, L. Vendramin, \emph{Finite-dimensional pointed Hopf algebras with alternating groups are trivial},  Ann. Mat. Pura Appl. (4) 190(2) (2011), 225--245 .

\bibitem{AG1}N. Andruskiewitsch and M. Gra\~na. \emph{Braided Hopf algebras over non-abelian finite groups}, Bol. Acad. Nacional de Ciencias (C\'ordoba) 63, 45--78 (1999). 

\bibitem{AG} N. Andruskiewitsch, M. Gra\~na, \emph{From racks to pointed Hopf algebras.}  Adv. Math. 178,  (2003)  no. 2, 177--243.

\bibitem{AHV}N. Andruskiewitsch, I. Heckenberger, L. Vendramin, Pointed Hopf algebras of odd dimension and Nichols algebras over solvable groups, Arxiv:2411.02304v1, (2024).

\bibitem{AHS}N. Andruskiewitsch, I. Heckenberger, H.-J. Schneider,
\emph{The Nichols algebra of a semisimple Yetter-Drinfeld module}, Amer.
J. Math. 132 6 (2010), 1493--1547.

\bibitem{AZ}N. Andruskiewitsch, S. Zhang, \emph{On pointed Hopf algebras associated to some conjugacy classes in $S_n$},  Proc. Amer. Math. Soc. 135  (2007), 2723--2731.

\bibitem{Ba}C. B\"arlingea, On the dimension of the Fomin-Kirillov algebra and related algebras,  arXiv:2001.04597v2, 2024.

\bibitem{Bazlov}
Y. Bazlov, \emph{Nichols-Woronowicz algebra model for Schubert calculus on Coxeter groups.} J. Algebra, 297(2):372–399, 2006.

\bibitem{BC} S. Beltr\'an Cubillos, \'Algebras de Nichols sobre grupos diedrales y pecios kthulhu en grupos espor\'adicos, PhD thesis, Universidad Nacional de C\'ordoba, 2020. 

\bibitem{BjornerBrenti}
A. Björner, F. Brenti, \emph{Combinatorics of Coxeter groups}, Graduate Texts in Mathematics 231, Springer, 2005.

\bibitem{Bourbaki}
N. Bourbaki, \emph{Groupes et algèbres de Lie}, Chapitre 4,5 et 6, \'Edition Masson, 1981.

\bibitem{CKS} J. S. Carter, S. Kamada, M. Saito \emph{Diagrammatic computations for quandles and cocycle knot invariants}, In: Diagrammatic Morphisms and Applications,
Proceedings of the AMS Special Session on Diagrammatic Morphisms in Algebra, Category Theory, and Topology, October, 21-22, 2000, San Francisco State University, Contemporary Math. 318, 
Editors: D. E. Radford, F. J. O . Souza  and D. N. Yetter.

\bibitem{Dold} C. Dold, \emph{Twist-equivalence of quadratic algebras associated to Coxeter groups}, 	PhD thesis, Manchester, 2016.

\bibitem{Dyer}M. Dyer, \emph{Reflection subgroups of Coxeter systems}, J. Algebra 206, (1998), 371--412.

\bibitem{ETV} J. S. Ellenberg, T.-T. Tran, C. Westerland, \emph{Fox-Neuwirth-Fuks cells, quantum shuffle algebras, and Malle’s conjecture for function fields}, ArXiv:1701.04541v2, 2023.

\bibitem{FG} F. Fantino, G.A. Garc\'ia, \emph{On pointed Hopf algebras over dihedral groups}, Pacific J. Math, Vol. 252 (2011), no. 1, 69--91

 \bibitem{FK} S. Fomin, A. Kirillov, \emph{Quadratic algebras, Dunkl elements, and Schubert calculus}, Advances in geometry, Progr. Math., vol. 172, Birkhäuser Boston, Boston, MA, 1999, 147--182.

 \bibitem{GGI} G. A. Garc\'ia, A. Garc\'ia Iglesias, \emph{Finite dimensional pointed Hopf algebras over S4}, Israel J. Math. 183 (2011), 417–444.

\bibitem{GeckPfeiffer}
M. Geck and G. Pfeiffer, \emph{Characters of Finite Coxeter Groups and Iwahori-Hecke Algebras}, London Math. Society Monographs, New Series, vol. 21, Oxford Science Publications, 2000.  


\bibitem{GranaJ} M. Gra\~na, \emph{A freeness theorem for Nichols algebras}, J. Algebra 231, 235--257, (2000).

\bibitem{He2}I. Heckenberger, \emph{Rank 2 Nichols Algebras with Finite Arithmetic Root System}, 
Algebr. Represent. Theory 11, 115--132,  (2008). 

\bibitem{He1} I. Heckenberger, \emph{Classification of arithmetic root systems}, 
Adv. Math. 220 59--124, (2009). 


\bibitem{HMV} I. Heckenberger, E. Meir, L. Vendramin, 
\emph{Finite-dimensional Nicholas algebras of simple Yetter-Drinfeld modules (over groups) of prime dimension}, Adv. Math. 444, 	Article 109637, (2024).

\bibitem{HS} I. Heckenberger, H.J. Schneider, \emph{Hopf algebras and root systems}, Mathematical Surveys and Monographs 247, AMS, 2020.

\bibitem{HV}I. Heckenberger, L. Vendramin, The classification of Nichols algebras over groups with finite root system of rank two, J. Eur. Math. Soc. 19, 1977--2017 (2017).

\bibitem{HV-JA}I. Heckenberger, L. Vendramin, Nichols algebras over groups with finite root system of rank two, III, Journal of Algebra 422, 223--256, (2015).

\bibitem{HV-skeleton}I. Heckenberger, L. Vendramin, A classification of Nichols algebras of semisimple
Yetter–Drinfeld modules over non-abelian groups, 
J. Eur. Math. Soc. 19, 299--356, (2017).  

\bibitem{H} R. B.  Howlett, \emph{On the Schur Multipliers of Coxeter Groups}, Journal of the London Mathematical Society Volume s2-38(2), 263--276, (1988).



\bibitem{KS}
{M.  Kapranov,  V. Schechtman},  
\emph{Shuffle algebras and perverse sheaves},  Pure and Applied Mathematics Quarterly,  Special Issue in Honor of Prof. Kyoji Saito' s 75th Birthday vol 16,  573--657 (2020), International Press.

 \bibitem{MO}S. Majid, R. Oeckl, \emph{Twisting of quantum differentials and the Planck scale Hopf algebra}, Comm. Math. Phys. 205 (1999), no. 3, 617–655.

 \bibitem{Michel}G. Michel, \emph{The Cohomology of Dihedral Nichols algebras using Koszul Complexes}, PhD thesis, University of Minnesota, 2020.

 \bibitem{MS}A. Milinski, H.-J. Schneider, \emph{Pointed indecomposable Hopf algebras over Coxeter groups}, In: New trends in Hopf algebra theory (La Falda, 1999), Contemp. Math., vol. 267, Amer. Math. Soc., Providence, RI, 2000,  215–-236.

\bibitem{serre}J. P. Serre, R\'epresentations Lin\'eaires des Groupes Finis, Hermann, Paris, 1971. 



\bibitem{Stembridge}
J. R. Stembridge, \emph{Quasi-Minuscule Quotients and Reduced Words for Reflections}, Journal of Algebraic Combinatorics 13 (2001), 275–-293.

\bibitem{Vendramin}
L. Vendramin, \emph{Nichols algebras associated to the transpositions of the symmetric group are twist-equivalent}, Proc. Amer. Math. Soc. 140 (2012), 3715--3723.


\end{thebibliography}
\end{document}